\numberwithin{equation}{section}
\theoremstyle{plain}
\newtheorem{thm}{Theorem}[section]
\newtheorem{lem}[thm]{Lemma}
\newtheorem{cor}[thm]{Corollary}
\theoremstyle{definition}
\newtheorem{exa}[thm]{Example}
\newtheorem{que}[thm]{Question}
\newcommand{\st}{\ \mid\ }
\newcommand{\A}{\ensuremath{\mathbf{A}}}
\newcommand{\N}{\ensuremath{\mathbb{N}}}
\newcommand{\Cg}{\mathrm{Cg}}
\title[Solvable, nilpotent and supernilpotent semigroups]{Solvable, nilpotent and supernilpotent semigroups with completely simple ideal and monoids}
\author[P.~Mayr]{Peter Mayr}
\address{Department of Mathematics, University of Colorado,  Boulder, USA}
\email{peter.mayr@colorado.edu}
\keywords{regular semigroups, nilpotent ideal extension, commutators, nilpotence, solvability}
\subjclass[2020]{20M10 (Primary), 08A30 (Secondary)}
\date{\today}
\begin{document}
\maketitle

\begin{abstract}
 Around 1980 commutator theory was generalized from groups to arbitrary algebras using the socalled term condition
 commutator. The semigroups that are abelian with respect to this commutator were classified by Warne~\cite{Wa:SOTC}.
 We study what solvability, nilpotence, and  supernilpotence in the sense of commutator theory mean for semigroups
 and how these notions relate to classical concepts in semigroup theory.
 We show that a semigroup with a completely simple ideal is solvable (left nilpotent or right nilpotent or
 supernilpotent) in the sense of commutator theory iff it is a nilpotent extension in the classical sense of semigroup
 theory of a completely simple semigroup with solvable (nilpotent) subgroups.
 These characterizations hold in particular for finite semigroups and for eventually regular semigroups,
 i.e., semigroups in which every element has some regular power.
 
 We also show that a monoid is (left and right) nilpotent in the sense of commutator theory iff it embeds into a
 nilpotent group.
\end{abstract}

\section{Introduction}
\label{sec:i}

 Commutators have been generalized from groups to other algebras in various ways.
 Around 1980 universal algebraists like Smith~\cite{Sm:MV}, Hermann~\cite{He:AAC}, Gumm~\cite{Gu:GMC},
 Freese and McKenzie~\cite{FM:CTC} developed what is now known as binary term condition commutator for congruences of
 algebras, which extends the commutator of two normal subgroups of groups. We refer to the latter citation for a
 historical overview, background and definitions (see also below).

 This commutator allows to generalize notions like abelian, solvable, nilpotent from groups to arbitrary algebras in a
 natural way. It is particularly well-behaved for algebras in varieties with modular congruence lattices
 (e.g. quasigroups, rings, lattices) and has proved extremely useful for the structure theory in this setting
 but also beyond. Unfortunately a variety of semigroups is congruence modular iff it consists of groups.
 Despite this, the commutator appeared in semigroup theory early on, for instance, in Warne's classification of abelian
 semigroups~\cite{Wa:SOTC,Wa:TCS}.

 In recent years, after the introduction of higher arity term condition commutators due to Bulatov~\cite{Bu:NFM}, 
 there has been renewed interest to understand and apply commutators to algebras outside of congruence modular varieties.
 Semigroups are natural candidates for these investigations as they are classical structures and their term operations
 are just induced by words, hence comparatively easy to understand.
 Mudrinski and Radovi{\'c} described higher commutators in semigroups with zero~\cite{RM:HCS},
 in completely simple semigroups~\cite{RM:CCS} and in orthodox semigroups~\cite{RM:SS}.
 Kinyon and Stanovsk{\'y} classified abelian and central congruence in inverse semigroups~\cite{KS:ACI}.

 We add to this line of research by first giving an overview of basic properties of the binary term condition commutator
 and higher commutators for semigroups.
 Then we investigate what solvability, nilpotence, and  supernilpotence in the sense of commutator theory mean for
 semigroups and how these notions relate to classical concepts in semigroup theory.
 Our main results are characterizations of solvable, nilpotent and supernilpotent for semigroups that have a
 completely simple ideal (Theorem~\ref{thm:csi}) and for eventually regular semigroups (Corollary~\ref{cor:er}),
 which unify those for completely simple, orthodox, and semigroups with zero given in~\cite{RM:HCS,RM:CCS,RM:SS},
 as well as a characterization of nilpotent (in the sense of commutator theory) monoids (Theorem~\ref{thm:nmonoid}).

\subsection{Definitions} We introduce basic notions and definitions of commutator theory based on~\cite{AM:SAHC,FM:CTC}.

 Let $\N := \{0,1,\dots \}$ and $[n] := \{1,\dots,n\}$ for $n\in\N$.

 We refer to~\cite{FM:CTC} for properties of the (binary) term condition commutator defined as follows:
 Let $\A$ be an algebra with congruences $\alpha,\beta,\delta$. Let
\[ M_\A(\alpha,\beta) := \left\langle \begin{bmatrix} a & a \\ b & b \end{bmatrix},\begin{bmatrix} c & d \\ c & d \end{bmatrix} \st (a,b)\in\alpha, (c,d)\in\beta \right\rangle \leq \A^{2\times 2}. \]  
 Then $\alpha$ \emph{centralizes} $\beta$ modulo $\delta$ (short $C(\alpha,\beta,\delta)$) if
\begin{equation} \label{eq:tc}
  \forall \begin{bmatrix} u & v \\ w & x \end{bmatrix}\in  M_\A(\alpha,\beta) : (u,v)\in\delta \Rightarrow (w,x)\in\delta.
\end{equation}  
 The \emph{commutator} $[\alpha,\beta]_\A$ is the smallest congruence $\delta$ of $\A$ such that $C(\alpha,\beta,\delta)$.
 Equivalently, $[\alpha,\beta]_\A$ is the smallest congruence $\delta$ such that each element in $M_\A(\alpha,\beta)$
 satisfies: if the entries in the first row are $\delta$-related, then the entries in the second row are $\delta$-related.

 We will usually omit the index $\A$ from $M_\A(\alpha,\beta)$ and $[\alpha,\beta]_\A$ if the algebra is clear from context.

 On an algebra $\A$ we denote the total congruence $A^2$ by $1$ and the trivial congruence (equality) by $0$.
 For a congruence $\alpha$ of $\A$ and $d\in\N$, define
 \[  [\alpha]^0 := \alpha,\ [\alpha]^{d+1} := [[\alpha]^d,[\alpha]^d]. \]
 Then $\A$ is
 \begin{itemize}
\item \emph{$d$-solvable} if $[1]^d = 0$;
\item \emph{left $d$-nilpotent} if $(1]^{d+1} := [\underbrace{1,[1,\dots [1,1}_{d+1}]\dots]]= 0$;
\item \emph{right $d$-nilpotent} if $[1)^{d+1} := [\dots\underbrace{[1,1],\dots,1}_{d+1}] = 0$; 
\item
 \emph{abelian} if $\A$ is $1$-solvable (equivalently left $1$-nilpotent or right $1$-nilpotent).
\end{itemize}
 Note that $[1]^0 = (1]^1 = [1)^1 = 1$ and $[1]^1 = (1]^2 = [1)^2 = [1,1]$.

 We will also need the notion of higher commutators as suggested by
 Bulatov in~\cite{Bu:NFM} and first developed by Aichinger and Mudrinski~\cite{AM:SAHC}.
 This generalizes the above binary commutator for congruences on algebraic structures from~\cite{FM:CTC}
 or~\cite[Section 4.13]{MMT:ALV1}.

 Let $\A$ be an algebra. For $n\in\N$ and congruences $\alpha_1,\ldots,\alpha_n$ of $\A$, we let
 $M_\A(\alpha_1,\ldots,\alpha_n)$ denote the subalgebra of
 $\A^{2^{n-1}\times 2}$ that is generated by all the elements 
 \[ \begin{tiny} \left(\begin{array}{cc}
    a_1 & a_1 \\
    \vdots & \vdots \\ 
    \vdots & \vdots \\ 
    \vdots & \vdots \\ 
    \vdots & \vdots \\ 
    \vdots & \vdots \\ 
    a_1 & a_1 \\
    b_1 & b_1 \\  
    \vdots & \vdots \\ 
    \vdots & \vdots \\ 
    \vdots & \vdots \\ 
    \vdots & \vdots \\ 
    \vdots & \vdots \\ 
    b_1 & b_1 \\  
    \end{array}\right),
 \left(\begin{array}{cc}
    a_2 & a_2 \\
    \vdots & \vdots \\ 
    \vdots & \vdots \\ 
    a_2 & a_2 \\
    b_2 & b_2 \\  
    \vdots & \vdots \\ 
    \vdots & \vdots \\ 
    b_2 & b_2 \\  
    a_2 & a_2 \\
    \vdots & \vdots \\ 
    \vdots & \vdots \\ 
    a_2 & a_2 \\
    b_2 & b_2 \\  
    \vdots & \vdots \\ 
    \vdots & \vdots \\ 
    b_2 & b_2 \\  
   \end{array}\right), \dots,
\left(\begin{array}{cc}
    a_{n-1} & a_{n-1} \\
    b_{n-1} & b_{n-1} \\
    a_{n-1} & a_{n-1} \\
    b_{n-1} & b_{n-1} \\
    \vdots & \vdots \\ 
    \vdots & \vdots \\ 
    \vdots & \vdots \\ 
    \vdots & \vdots \\ 
    \vdots & \vdots \\ 
    \vdots & \vdots \\ 
    \vdots & \vdots \\ 
    \vdots & \vdots \\ 
    \vdots & \vdots \\ 
    a_{n-1} & a_{n-1} \\
    b_{n-1} & b_{n-1} \\
   \end{array}\right),
\left(\begin{array}{cc}
    a_n & b_n \\
    \vdots & \vdots \\ 
    \vdots & \vdots \\ 
    \vdots & \vdots \\ 
    \vdots & \vdots \\ 
    \vdots & \vdots \\ 
    \vdots & \vdots \\ 
    \vdots & \vdots \\ 
    \vdots & \vdots \\ 
    \vdots & \vdots \\ 
    \vdots & \vdots \\ 
    \vdots & \vdots \\ 
    a_n & b_n \\  
    \end{array}\right)\in A^{2^{n-1}\times 2} \end{tiny} 
\]
 such that $(a_i,b_i)\in\alpha_i$ for all $i\in\{1,\ldots,n\}$.
 The $n$-\emph{ary commutator} $[\alpha_1,\ldots,\alpha_n]$ is defined as the
 smallest congruence $\delta$ of $\A$ such that for every
 $x\in M_\A(\alpha_1,\ldots,\alpha_n)$:
\begin{equation} \label{eq:htc}
 \text{if } (x_{i1}, x_{i2})\in\delta \text{ for all } i\in\{1,\ldots,2^{n-1}-1\}, \text{ then }  (x_{2^{n-1}1},x_{2^{n-1}2})\in\delta.
\end{equation}
 This definition is equivalent to the one given
 in~\cite[Definition~3.2]{AM:SAHC} and in~\cite[Definition~3]{Bu:NFM}.
 Note that $M_\A(\alpha_1) = \alpha_1$ for all congruences $\alpha_1$.
 For $n = 2$ the binary commutator defined by~\eqref{eq:htc} is precisely the one previously defined by~\eqref{eq:tc}.

 It is straightforward that the $n$-ary commutator is monotone in each
 argument and that it satisfies
\[ [\alpha_1,\ldots,\alpha_n]\leq\alpha_1\wedge\ldots\wedge\alpha_n \]
 and
 \[ [\alpha_1,\ldots,\alpha_n]\leq[\alpha_2,\ldots,\alpha_{n}]. \]
 Hence left or right $d$-nilpotent both imply $d$-solvable for arbitrary algebras and $d\in\N$. 
 We also have a descending chain of congruences
\[ 1 \geq [1,1] \geq [1,1,1] \geq \dots \]
 Let $d\in\N$. Then $\A$ is
\begin{itemize}
\item
 \emph{$d$-supernilpotent} if $[\underbrace{1,\ldots,1}_{d+1 \text{ times}}]= 0$.
\end{itemize}
 Again $1$-supernilpotent is abelian.
 
 For algebras in congruence modular varieties the (higher) commutators are also symmetric in their arguments and
 distribute over joins as proved by Moorhead~\cite{Mo:HCT}.
 In general, in particular for semigroups, these properties do not hold. Hence we have distinct notions of left and
 right nilpotence. For an explicit example, we will show the free monoid over $2$ generators is
 left $2$-nilpotent but neither right $2$-nilpotent nor $2$-supernilpotent in Theorem~\ref{thm:free}.
 
 Also the relationship between supernilpotence and nilpotence is more complicated than their names suggest: 
 A group is $d$-supernilpotent iff it is left $d$-nilpotent iff it is right $d$-nilpotent~\cite{AM:SAHC}.
 Moorhead showed that every supernilpotent algebra in a Taylor variety (including essentially every named algebra except
 $G$-sets and semigroups) is nilpotent~\cite{Mo:STA}.
 Kearnes and Szendrei proved that every finite supernilpotent algebra is nilpotent~\cite{KS:ISSN}.
 Still Moore and Moorhead constructed an infinite supernilpotent algebra which is not even solvable,
 hence neither left nor right nilpotent~\cite{MM:SNN}.
 It remains open whether supernilpotent semigroups are nilpotent (Question~\ref{que:sn} below).
 
 From the definitions it is straightforward that the class of left $d$-nilpotent algebras, etc., of fixed type
 is closed under direct products and subalgebras. We use this in the following example. 
   
\begin{exa} \label{exa:rectangular}
 (Left or right) zero semigroups as well as rectangular bands 
 are 
 abelian.  

 Since the multiplication of a left/right zero semigroup $S$ is essentially unary, we see
\[ M(1,1) = \left\{ \begin{bmatrix} a & a \\ b & b \end{bmatrix},\begin{bmatrix} c & d \\ c & d \end{bmatrix} \st a,b,c,d\in S\right\}. \]
 Clearly if the entries in the first row of any element in $M(1,1)$ are equal, then so are the entries in the second.
 Hence $C(1,1,0)$ and $[1,1]=0$ by~\eqref{eq:tc}.

 Further, any rectangular band is a direct product of a left zero and a right zero semigroup, hence abelian as well.
\end{exa}

 In general $\A/(1]^{d+1}$ is the largest left $d$-nilpotent quotient of $\A$, etc. However a quotient of an
 abelian (left/right nilpotent) algebra may not be abelian (left/right nilpotent) as the following shows.
 
\begin{exa} \label{ex:sl}
  The $2$-element semilattice $(\{0,1\},\cdot)$ is a commutative monoid but not abelian (it is a \emph{snag})
  as witnessed by
\[ \begin{bmatrix} 0 & 0 \\ 1 & 1 \end{bmatrix}
  \begin{bmatrix} 0 & 1 \\ 0 & 1 \end{bmatrix} =
  \begin{bmatrix} 0 & 0 \\ 0 & 1 \end{bmatrix} \in M(1,1), \]
 further not 
 solvable since $[1,1]
 =1$ by~\eqref{eq:tc} and not $d$-supernilpotent for any $d\in\N$ by~\eqref{eq:htc} since e.g. for $d=2$
\[ \begin{bmatrix} 0 & 0 \\ 0 & 0 \\1 & 1 \\ 1 & 1 \end{bmatrix}
  \begin{bmatrix} 0 & 0 \\1 & 1 \\ 0 & 0 \\ 1 & 1 \end{bmatrix}
  \begin{bmatrix} 0 & 1 \\ 0 & 1 \\ 0 & 1 \\ 0 & 1 \end{bmatrix} =
\begin{bmatrix} 0 & 0 \\ 0 & 0 \\ 0 & 0 \\ 0 & 1 \end{bmatrix} \in M(1,1,1). \]
 Note that this example also shows that abelianess, 
solvability, left and right nilpotence, 
and supernilpotence
 of semigroups are not preserved by adjoining $0$ or $1$.
  
 The natural numbers $\N$ including $0$ form a commutative and cancellative monoid under addition,
 hence an abelian monoid (see Theorem~\ref{thm:nmonoid}).
 Let $I := \N\setminus\{0\}$. Then $\N/I = \{0,I\}$ is a $2$-element semilattice,
 hence non-abelian by Example~\ref{ex:sl}.
\end{exa}

\subsection{Results}
 Throughout the paper `nilpotent semigroup' will refer to (left or right) nilpotent in the sense of commutator theory
 that $(1]^{d+1} = 0$ or $[1)^{d+1}=0$ for some $d\in\N$. When we mean `nilpotent' in the classical semigroup sense that
 \[ S^{d+1}:= \{a_1\dots a_{d+1} \st a_1,\dots,a_{d+1}\in S \} = 0, \]
 we will mention this explicitly. 

 In~\cite[Propositions 3.5, 3.6]{RM:HCS} Mudrinski and Radovi{\'c} showed that for a semigroup with $0$ the conditions
 of solvability, nilpotence (in the sense of commutator theory that $(1]^{d+1} = 0$), supernilpotence and nilpotence
 (in the classical semigroup theory sense that $S^{d+1} = 0$) are all equivalent.
 In~\cite[Proposition 5.11]{RM:SS} they proved that completely simple semigroups are solvable (nilpotent)
 iff their subgroups are solvable (nilpotent).
 In~\cite[Proposition 4.4]{RM:SS} they showed that nilpotent and supernilpotent orthodox semigroups are the same.
 
 In the first main result of this paper we unify these cases by characterizing semigroups with a completely simple
 ideal that have one of these properties. Recall that a semigroup $S$ is a \emph{nilpotent extension} of an ideal $K$ if
\[ S^m \subseteq K \text{ for some } m\in\N. \]
 
\begin{thm} \label{thm:csi}
 Let $S$ be a semigroup  with a completely simple ideal $K$.
\begin{enumerate}
\item \label{it:solvable}
 Then $S$ is solvable iff $S$ is a nilpotent extension of $K$ and all subgroups of $K$ are solvable. 
\item \label{it:nilpotent}
 Then $S$ is (left or right) nilpotent iff $S$ is supernilpotent iff $S$ is a nilpotent extension of $K$
 and all subgroups of $K$ are nilpotent.
\end{enumerate}  
\end{thm}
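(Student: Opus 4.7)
My plan is to establish both parts by treating the ``if'' and ``only if'' directions separately, relying on the Mudrinski--Radovi\'{c} results for completely simple semigroups~\cite{RM:SS} and for semigroups with zero~\cite{RM:HCS}, on the inheritance of TC-commutator properties to subalgebras, and on the general inequalities $(1]^{d+1},\, [1)^{d+1} \leq [\underbrace{1,\ldots,1}_{d+1}]$ from~\cite{AM:SAHC}, which make supernilpotence the strongest of the three notions in part~(2).

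For the ``if'' direction, I would first invoke~\cite{RM:SS} to conclude that $K$ itself is solvable (resp.\ nilpotent, and refine the argument there to upgrade this to supernilpotence when the subgroups of $K$ are nilpotent). To lift the property to $S$, I observe that any generator of $M_S(\alpha_1,\ldots,\alpha_n)$ has its entries in $S$, and that since the operation on $M_S$ is componentwise semigroup multiplication, any product of $m$ or more generators has all of its entries in $S^m \subseteq K$. Consequently, after enough iterations the commutator $[1]^k_S$ (for part~(1)) or the higher commutator $[\underbrace{1,\ldots,1}_{k+1}]_S$ (for part~(2)) is controlled by constraints among pairs inside $K$, and these vanish by the corresponding property of $K$; determining the exact $k$ as a function of $m$ and the class of $K$ is routine bookkeeping.

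For the ``only if'' direction, the subgroup half is immediate: every maximal subgroup $H$ of $K$ is a subalgebra of $S$, so it inherits TC-solvability (resp.\ nilpotence) from $S$, which agrees with the classical notion on groups. The main obstacle, and the heart of the argument, is showing $S^m \subseteq K$. I would proceed by contraposition: assume $S^n \not\subseteq K$ for every $n$ and construct, for each $d$, an explicit nontrivial element of $[\underbrace{1,\ldots,1}_{d+1}]_S$, contradicting supernilpotence and hence (by the inequalities above) also left nilpotence, right nilpotence, and the solvability needed for part~(1). Modelled on the $2$-element semilattice witness of Example~\ref{ex:sl}, I would take $s_1,\dots,s_n \in S\setminus K$ with $s_1\cdots s_n \notin K$ and a fixed idempotent $e\in K$, and build generators of $M_S(1,\ldots,1)$ from $e$ and the $s_i$ according to the prescribed higher-commutator pattern. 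The componentwise product should then yield entries that are equal in all but the last row (with the requisite collapses happening inside $K$ via the Rees matrix structure $K \cong \mathcal{M}[G;I,\Lambda;P]$, possibly after sandwiching with idempotents of $eKe$) and unequal in the last row, because one entry lies in $K$ (forced by some $e$-factor) while the other equals $s_1\cdots s_n \notin K$. Arranging the row-equalities in the first $2^d - 1$ rows is the delicate step, and I expect the Rees matrix structure together with the choice of $e$ to do the work.

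Part~(2) is then assembled from the two directions: the ``only if'' implication, applied to any of the three hypotheses (left nilpotent, right nilpotent, supernilpotent), yields $S^m \subseteq K$ together with nilpotent subgroups; the ``if'' direction produces supernilpotence from this common conclusion; and supernilpotence implies both left and right nilpotence by the general inequalities, closing the cycle.
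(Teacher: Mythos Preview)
There is a genuine gap at the foundation of your strategy. The inequalities $(1]^{d+1},\,[1)^{d+1}\le[\underbrace{1,\dots,1}_{d+1}]$ that you attribute to~\cite{AM:SAHC} are proved there only for Mal'cev (hence congruence modular) varieties; they are \emph{not} known for semigroups. Indeed, Moore--Moorhead~\cite{MM:SNN} exhibit a supernilpotent algebra that is not even solvable, so the implication ``supernilpotent $\Rightarrow$ left/right nilpotent'' fails in general, and whether it holds for semigroups is exactly the open Question~\ref{que:sn}. Your plan to ``close the cycle'' in part~(2) by passing through supernilpotence therefore does not work. Relatedly, in your ``only if'' paragraph the logic runs the wrong way even granting your inequalities: they would give supernilpotent $\Rightarrow$ nilpotent $\Rightarrow$ solvable, so exhibiting a nonzero $(d{+}1)$-ary commutator (i.e.\ ``not supernilpotent'') does \emph{not} contradict solvability or nilpotence.

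The paper avoids this by treating all four hypotheses in parallel rather than funnelling through one of them. For the forward direction it proves $S^m\subseteq K$ separately from each assumption (solvable, left nilpotent, right nilpotent, supernilpotent) via short explicit matrices in $M_S(\alpha,\beta)$ or $M_S(1,\dots,1)$, using an idempotent $e\in K$ with $ae=a$; no snag-type contraposition is needed. For the backward direction it builds explicit abelian/central series from Rees congruences $\rho_{S^i}$ together with congruences $\sigma_N$ coming from the normal subgroup lattice of $G$ in the Rees matrix description of $K$; the supernilpotent case is a separate, substantially longer argument using commutator collection in the nilpotent group $G$ and an iterated difference operator---it is not the ``routine bookkeeping'' you anticipate, and the paper explicitly remarks that no direct reduction between nilpotence and supernilpotence is known here.
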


 Theorem~\ref{thm:csi} applies in particular to all finite semigroups. It is proved in Section~\ref{sec:csi}.
 For~\eqref{it:nilpotent} we establish the same structural description for both nilpotent and supernilpotent semigroups.
 We do not know of a direct proof of the equivalence of nilpotence and supernilpotence, e.g.,
 by expressing higher commutators via some iteration of binary commutators in this case.

 A semigroup $S$ is \emph{eventually regular} if every element $a\in S$ has some power which is regular, i.e.
 some $n\geq 1$ and $b\in S$ such that $a^n ba^n = a^n$~\cite{Ed:ERS}.
 In particular finite, periodic, group-bound and regular semigroups are eventually regular.

 Recall from Example~\ref{ex:sl} that the idempotents $E(S)$ of any solvable, nilpotent or supernilpotent semigroup
 $S$ form an antichain under the usual order
\[ e \leq f \text{ if } ef = fe = e, \]
 i.e. all idempotents are \emph{primitive}.
 A straightforward adaptation of the proof of~\cite[Theorem 3.3.3 (4)$\Rightarrow$(1)]{Ho:FST} shows that any eventually
 regular semigroup with a primitive idempotent has a unique completely simple ideal (see Lemma~\ref{lem:er}).
 Hence Theorem~\ref{thm:csi} immediately yields the following.
 
\begin{cor} \label{cor:er}
 Let $S$ be an eventually regular semigroup.  
\begin{enumerate}
\item
 Then $S$ is solvable iff $S$ is a nilpotent extension of a completely simple semigroup with solvable subgroups.
\item
 Then $S$ is (left or right) nilpotent iff $S$ is supernilpotent iff $S$ is a nilpotent extension of
 a completely simple semigroup with nilpotent subgroups. 
\end{enumerate}
\end{cor}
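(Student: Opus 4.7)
The plan is to derive Corollary~\ref{cor:er} directly from Theorem~\ref{thm:csi} together with Lemma~\ref{lem:er}. The ``if'' directions of both parts require nothing new: if $S$ is a nilpotent extension of a completely simple semigroup $K$, then $K$ is in particular an ideal of $S$, so $S$ is a semigroup with completely simple ideal and Theorem~\ref{thm:csi} yields solvability in part~(1) and supernilpotence (hence both left and right nilpotence) in part~(2).

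For the ``only if'' direction, assume $S$ is solvable (or left or right nilpotent, or supernilpotent). The only task beyond Theorem~\ref{thm:csi} is to produce a completely simple ideal in $S$. I would first check that $S$ (if nonempty) has an idempotent: by eventual regularity, for any $a\in S$ there exist $n\geq 1$ with $a^n$ regular and $b\in S$ with $a^nba^n=a^n$, and then $e:=a^nb$ satisfies $e^2=e$. Second, every idempotent of $S$ is primitive, as already noted in the text before the corollary: indeed, any two comparable idempotents $e<f$ would generate a $2$-element semilattice subsemigroup, which by Example~\ref{ex:sl} is neither solvable nor left/right nilpotent nor supernilpotent, contradicting the closure of each of these classes under subalgebras. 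Having a primitive idempotent, Lemma~\ref{lem:er} furnishes a unique completely simple ideal $K$ of $S$, so Theorem~\ref{thm:csi} applies and delivers exactly the stated structural description.

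The main obstacle is not in the corollary itself but in the two supporting results that the paper develops separately: Theorem~\ref{thm:csi} carries the real structural content, and Lemma~\ref{lem:er} packages the adaptation of Howie's argument from~\cite{Ho:FST} to the eventually regular setting. Given those, the only delicate verification within the corollary is that the forbidden $2$-element semilattice cannot embed into $S$, which is immediate from Example~\ref{ex:sl} and the fact that the relevant commutator-theoretic properties pass to subalgebras.
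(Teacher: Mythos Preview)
Your proposal is correct and follows essentially the same route as the paper: the text preceding the corollary already notes that Example~\ref{ex:sl} forces all idempotents of a solvable/nilpotent/supernilpotent semigroup to be primitive, then invokes Lemma~\ref{lem:er} to obtain a completely simple ideal, after which Theorem~\ref{thm:csi} does the rest. You have simply made explicit the existence of an idempotent via eventual regularity and the subalgebra argument behind primitivity, both of which the paper leaves implicit.
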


 In particular every solvable, nilpotent supernilpotent, respectively, regular semigroup is clearly a completely
 simple semigroup with the same property~\cite[cf. Proposition 5.11]{RM:CCS}. 
 Also every solvable, nilpotent supernilpotent, respectively, inverse semigroup is a group with the same
 property~\cite[Theorem 1.3]{KS:ACI}. 

 Finally we give a characterization of nilpotent monoids.

\begin{thm} \label{thm:nmonoid}
 For a monoid $S$ and $n\in\N$ the following are equivalent:
\begin{enumerate}
\item \label{it:crn}
 $S$ is cancellative and right $n$-nilpotent.
\item \label{it:rln}
 $S$ is right and left $n$-nilpotent.
\item \label{it:eng}
 $S$ embeds into an $n$-nilpotent group.  
\end{enumerate}
\end{thm}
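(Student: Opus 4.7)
I plan to prove the cycle $(3) \Rightarrow (2) \Rightarrow (1) \Rightarrow (3)$. For $(3) \Rightarrow (2)$, let $S$ be a submonoid of an $n$-nilpotent group $G$. Since the monoid signature on $G$ has strictly fewer fundamental operations than the group signature, $M_{G,\mathrm{mon}}(\alpha,\beta)$ is a subalgebra of its group counterpart $M_{G,\mathrm{grp}}(\alpha,\beta)$ in $G^{2\times 2}$. As $C(\alpha,\beta,\delta)$ is a universal statement over $M$, a smaller $M$ is easier to satisfy, yielding $[\alpha,\beta]_{G,\mathrm{mon}} \leq [\alpha,\beta]_{G,\mathrm{grp}}$; iterating with monotonicity of the binary commutator in each argument gives $(1]^{n+1}_{G,\mathrm{mon}} \leq (1]^{n+1}_{G,\mathrm{grp}} = 0$, and similarly for $[1)^{n+1}$. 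Thus $G$ is left and right $n$-nilpotent as a monoid, and closure of these classes under subalgebras transfers the properties to $S$.

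\textbf{$(2) \Rightarrow (1)$.} Right $n$-nilpotence is immediate from (2); I derive cancellativity from left $n$-nilpotence by induction on $n$. When $n=1$, $S$ is abelian, and $C(1,1,0)$ applied to $t(x,y)=xy$ with $\alpha$-pair $(a,1)$ and $\beta$-pair $(c,d)$ yields $ac=ad \Rightarrow c=d$; right cancellation is dual via $t(x,y)=yx$. For $n\geq 2$, let $\gamma := (1]^n$. The standard quotient formula $[\alpha/\gamma,\beta/\gamma]_{S/\gamma}=([\alpha,\beta]_S \vee \gamma)/\gamma$, combined with the decreasing sequence $(1]^k$ giving $\gamma \leq (1]^k$ for $k \leq n$, shows that $(1]^n_{S/\gamma}=0$, so $S/\gamma$ is a left $(n-1)$-nilpotent monoid, cancellative by the induction hypothesis. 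Now $ab=ac$ in $S$ forces $(b,c)\in\gamma$ by cancellation in $S/\gamma$; moreover $[1,\gamma]=(1]^{n+1}=0$ gives $C(1,\gamma,0)$, and applying this to $t(x,y)=xy$ with $\alpha$-pair $(a,1)$ and $\beta$-pair $(b,c)\in\gamma$ delivers $b=c$.

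\textbf{$(1) \Rightarrow (3)$.} This is the principal direction and the main obstacle. The plan is to invoke Malcev's classical embedding theorem for nilpotent semigroups: a cancellative monoid satisfying the identity $X_n = Y_n$, where $X_0 = x$, $Y_0 = y$, $X_{k+1} = X_k z_{k+1} Y_k$ and $Y_{k+1} = Y_k z_{k+1} X_k$, embeds into a group of nilpotency class at most $n$. It then suffices to verify this identity in every right $n$-nilpotent monoid (cancellation is not even needed at this step). I propose to show by induction on $k$ that for every substitution of the free variables in $S$ one has $(X_k, Y_k) \in [1)^{k+1}$; at $k = n$ this collapses to $X_n = Y_n$ since $[1)^{n+1} = 0$. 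For the inductive step, I would combine the hypothesis $(X_{k-1}, Y_{k-1}) \in [1)^k$ with matrices in $M([1)^k, 1)$ such as
\[
\begin{bmatrix} X_{k-1} & X_{k-1} \\ Y_{k-1} & Y_{k-1} \end{bmatrix}
\begin{bmatrix} z_k & z_k \\ z_k & z_k \end{bmatrix}
\begin{bmatrix} X_{k-1} & Y_{k-1} \\ X_{k-1} & Y_{k-1} \end{bmatrix}
=
\begin{bmatrix} X_{k-1} z_k X_{k-1} & X_k \\ Y_k & Y_{k-1} z_k Y_{k-1} \end{bmatrix}
\]
together with its companions obtained by exchanging the roles of $X_{k-1}$ and $Y_{k-1}$ (using symmetry of $[1)^k$). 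The term condition $C([1)^k, 1, [1)^{k+1})$ would then be used to promote $(X_k, Y_k)$ into $[1)^{k+1}$ through congruence combinations. The hard part, where I expect the main technical effort, is to engineer these matrix combinations so that the pair $(X_k, Y_k)$, which a priori appears on the anti-diagonal rather than the bottom row, genuinely lands in $[1)^{k+1}$ rather than merely in the weaker $[1)^k$.
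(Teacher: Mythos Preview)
Your overall architecture matches the paper's: cycle through the Neumann--Taylor/Mal'cev embedding criterion, with $(2)\Rightarrow(1)$ reducing to cancellativity of left nilpotent monoids and $(1)\Rightarrow(3)$ reducing to the semigroup identity $X_n\approx Y_n$. Your $(3)\Rightarrow(2)$ is fine. For $(2)\Rightarrow(1)$ your induction works, though note that for semigroups only the inequality $[\alpha/\gamma,\beta/\gamma]\le([\alpha,\beta]\vee\gamma)/\gamma$ is available in general, not the equality you state; fortunately $\le$ is all you use. (The paper avoids quotients entirely: from $ab=ac$ the matrix $\left[\begin{smallmatrix}a&a\\1&1\end{smallmatrix}\right]\left[\begin{smallmatrix}b&c\\b&c\end{smallmatrix}\right]$ gives $(b,c)\in[1,1]$, and then the \emph{same} matrix lies in $M(1,[1,1])$, giving $(b,c)\in(1]^3$, etc.)

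The real gap is in $(1)\Rightarrow(3)$. Your inductive claim $(X_k,Y_k)\in[1)^{k+1}$ is precisely the right target (it is Lemma~\ref{lem:monoidright}), but the matrix you wrote down cannot be salvaged: the pair $(X_k,Y_k)$ sits on the anti-diagonal, and neither ``symmetry of $[1)^k$'' (which only means the congruence is a symmetric relation) nor swapping the roles of $X_{k-1},Y_{k-1}$ will ever move it onto a single row. The missing idea is to exploit the monoid identity $1$ to slide a block of factors from one side of the word to the other. With $X:=X_{k-1}$, $Y:=Y_{k-1}$, $z:=z_k$, take in $M([1)^k,1)$ the product
\[
\begin{bmatrix} Xz & 1 \\ Xz & 1 \end{bmatrix}
\begin{bmatrix} X & X \\ Y & Y \end{bmatrix}
\begin{bmatrix} 1 & zX \\ 1 & zX \end{bmatrix}
=
\begin{bmatrix} XzX & XzX \\ XzY & YzX \end{bmatrix}
=
\begin{bmatrix} XzX & XzX \\ X_k & Y_k \end{bmatrix},
\]
where the outer factors are $1$-generators (one column is literally $1$) and the middle factor is a $[1)^k$-generator by induction. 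The top row is constant, so $C([1)^k,1,[1)^{k+1})$ yields $(X_k,Y_k)\in[1)^{k+1}$ immediately; the base case $(xy,yx)\in[1,1]$ is the same trick with $z$ absent. Without inserting $1$ in this way the induction step does not close.
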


 For $n=1$ Theorem~\ref{thm:nmonoid} simplifies to: a monoid is cancellative and commutative iff it is abelian iff
 it embeds into an abelian group.
 Theorem~\ref{thm:nmonoid} is proved in Section~\ref{sec:monoid}.
 There we also show that for monoids, $2$-supernilpotence is equivalent to left and right $2$-nilpotence in
 Theorem~\ref{thm:2sn}. The following remain open.

\begin{que}   \label{que:sn}
 Does supernilpotence imply left and right nilpotence for all semigroups? For all monoids? What about the converse?
\end{que}

\section{Commutators of Rees congruences and on reducts}  \label{sec:prelims}

 We collect a few auxiliary results necessary for proving our main theorems.

 For an ideal $I$ of $S$ let
 \[ \rho_I := I\times I\ \cup\ \{ (s,s) \st s\in S\} \]
 denote the \emph{Rees congruence} induced by $I$. Then $S/\rho_I = S/I$.

 We recall an upper bound for the commutator of Rees congruences due to Mudrinski and Radovi{\' c}.
 
\begin{lem}~\cite[Lemma 2.8, Theorem 1.1]{RM:HCS} \label{lem:ideals}
 Let $S$ be a semigroup with ideals $I_1,\dots,I_n$ for $n\in\N$, let $J := \prod_{f\in S_n} I_{f(1)}\dots I_{f(n)}$.
 Then
 \[ [\rho_{I_1},\dots,\rho_{I_n}] \leq \rho_J \]
 with equality if $S$ has $0$.
\end{lem}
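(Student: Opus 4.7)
The plan is twofold: to show $[\rho_{I_1},\dots,\rho_{I_n}]\leq\rho_J$ by checking that $\rho_J$ satisfies the higher term condition $C(\rho_{I_1},\dots,\rho_{I_n},\rho_J)$, and, when $S$ contains a zero, to establish the reverse inclusion by constructing, for each $z\in J$, an element of $M_S(\rho_{I_1},\dots,\rho_{I_n})$ whose last row is $(0,z)$ and every other row is $(0,0)$.

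For the upper bound, take any $x\in M_S(\rho_{I_1},\dots,\rho_{I_n})$, written as a semigroup word $w(g_1,\dots,g_m)$ where each generator $g_j$ has a type $k_j\in[n]$ and a parameter pair $(a^{(j)},b^{(j)})\in\rho_{I_{k_j}}$. Index the rows by binary strings $s\in\{0,1\}^{n-1}$, with the last row being $s=(1,\dots,1)$. I would split into three cases. If for some $k\in[n-1]$ every type-$k$ generator satisfies $a^{(j)}=b^{(j)}$, then flipping the $k$-th bit of any row index leaves every matrix entry unchanged, so the last row has the same entries as a non-last row, whose entries are already $\rho_J$-related by the TC hypothesis. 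If every type-$n$ generator satisfies $a^{(j)}=b^{(j)}$, the two columns of $x$ are identical and there is nothing to prove. Otherwise for each $k\in[n]$ fix a generator $g_{j_k}$ of type $k$ with $a^{(j_k)},b^{(j_k)}\in I_k$; in the last row the column-$1$ entry is a semigroup product which at the positions of $g_{j_1},\dots,g_{j_n}$ carries one factor from each $I_k$. Grouping the factors into $n$ consecutive blocks, each containing one of these distinguished $I_k$-factors, and using that each $I_k$ is an ideal to absorb the surrounding semigroup elements, I conclude that the product lies in $I_{\sigma(1)}\cdots I_{\sigma(n)}\subseteq J$ for some $\sigma\in S_n$. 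The same argument for column $2$ then yields the desired $\rho_J$-relation.

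For the lower bound (when $S$ has a zero), take $z\in J$ of the form $z=y_1\cdots y_n$ with $y_i\in I_{\sigma(i)}$ for some $\sigma\in S_n$, and form the matrix product $w:=g^{(\sigma(1))} g^{(\sigma(2))}\cdots g^{(\sigma(n))}$ in $S^{2^{n-1}\times 2}$, where $g^{(\sigma(i))}$ is a generator of type $\sigma(i)$ with parameter pair $(0,y_i)$. A direct evaluation shows the last row of $w$ equals $(0,z)$: the unique type-$n$ factor contributes $0$ in column $1$ and $y_{\sigma^{-1}(n)}$ in column $2$, while every other factor contributes $y_i$ in both columns. Every other row contains a $0$ contributed by some type-$k$ generator whose bit $s_k$ equals $0$, forcing both column entries to be $0$. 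The defining property of the higher commutator applied to $w$ then gives $(0,z)\in[\rho_{I_1},\dots,\rho_{I_n}]$, and transitivity extends this to all pairs in $\rho_J$.

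The main obstacle is the bookkeeping in the third case of the upper bound: one has to cut the word at precisely the positions of the distinguished generators, use the ideal property of each $I_k$ to absorb surrounding (possibly non-ideal) semigroup factors, and keep track of which permutation $\sigma$ records the left-to-right order in which these generators appear.
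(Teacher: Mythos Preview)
The paper does not give its own proof of this lemma; it is quoted from \cite[Lemma~2.8, Theorem~1.1]{RM:HCS} and used as a black box, so there is nothing to compare against directly.

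Your argument is the natural one and is correct, modulo one point about the meaning of $J$. Read literally as a semigroup product over all $f\in S_n$, the set $J=\prod_{f\in S_n}I_{f(1)}\cdots I_{f(n)}$ would be contained in each single $I_{\sigma(1)}\cdots I_{\sigma(n)}$, and then your case~3 conclusion ``the last-row entry lies in $I_{\sigma(1)}\cdots I_{\sigma(n)}\subseteq J$'' would point the wrong way. However, the paper's own use of the lemma (e.g.\ deriving $[\rho_{S^i},\rho_{S^i}]\leq\rho_{S^{2i}}$ and $[1_S,\rho_{S^i}]\leq\rho_{S^{i+1}}$ in the proof of Lemma~\ref{lem:rbi}) only makes sense if $J$ is the \emph{union} $\bigcup_{f\in S_n}I_{f(1)}\cdots I_{f(n)}$, which is how you implicitly read it; with that reading your case~3 is fine. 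The case split itself is clean: if some coordinate $k$ has only ``diagonal'' generators you reduce to a non-last row (or to equal columns when $k=n$), and otherwise the $n$ distinguished positions let you block the word into $n$ ideal factors in some order $\sigma$. For the lower bound with zero, your witness $g^{(\sigma(1))}\cdots g^{(\sigma(n))}$ built from pairs $(0,y_i)\in\rho_{I_{\sigma(i)}}$ does exactly what is needed: every non-last row picks up a factor $0$, and the last row is $(0,y_1\cdots y_n)=(0,z)$, forcing $(0,z)$ into the commutator and hence $\rho_J$ into it by transitivity.
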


 The following reduces commutators of congruences below $\rho_I$ to commutators in the ideal $I$ of $S$.

\begin{lem} \label{lem:MIab}
 Let $S$ be a semigroup with an ideal $I$ and congruences $\alpha_1,\dots\alpha_n\leq\rho_I$.
 Then $[\alpha_1,\dots\alpha_n]_S$ is the congruence of $S$ generated by $[\alpha_1|_I,\dots\alpha_n|_I]_I$.
\end{lem}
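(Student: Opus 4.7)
The plan is to prove the two containments $[\alpha_1,\dots,\alpha_n]_S \geq \theta$ and $[\alpha_1,\dots,\alpha_n]_S \leq \theta$, where $\theta$ denotes the congruence of $S$ generated by $\gamma := [\alpha_1|_I,\dots,\alpha_n|_I]_I$ and $\delta := [\alpha_1,\dots,\alpha_n]_S$. Throughout I use that $M_I(\alpha_1|_I,\dots,\alpha_n|_I) \subseteq M_S(\alpha_1,\dots,\alpha_n)$, which holds because the generators of $M_I$ come from pairs in $\alpha_i|_I \subseteq \alpha_i$ and the subalgebra closure inside $I^{2^{n-1}\times 2}$ agrees with the closure inside $S^{2^{n-1}\times 2}$ restricted to $I$. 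For the easy direction $\theta \leq \delta$, I verify that $\delta|_I := \delta \cap I^2$ is a congruence of $I$ satisfying the higher term condition for $I$: if $t \in M_I$ has its first $2^{n-1}-1$ rows $\delta|_I$-related, then the same rows are $\delta$-related in $M_S$, so by minimality of $\delta$ the last row is $\delta$-related, and in fact $\delta|_I$-related since its entries lie in $I$. Minimality of $\gamma$ then gives $\gamma \leq \delta|_I \leq \delta$, so $\theta \leq \delta$.

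The reverse direction hinges on showing that $\gamma$ is closed under multiplication by $S$ from both sides. Define
\[ \gamma'' := \{(p,q) \in I^2 \st (sps',sqs') \in \gamma \text{ for all } s,s' \in S^1\}, \]
which is easily checked to be a congruence of $I$ contained in $\gamma$. To conclude $\gamma'' = \gamma$ by minimality it suffices to verify the term condition for $\gamma''$. Given $t \in M_I$ with its top $2^{n-1}-1$ rows $\gamma''$-related and arbitrary $s,s' \in S^1$, the componentwise scaled matrix $s\cdot t\cdot s'$ again lies in $M_I$: writing $w=x_{i_1}\cdots x_{i_m}$ and $t=h_{i_1}\cdots h_{i_m}$ with $h_{i_j}$ the $M_I$-generators from pairs $(p_{i_j},q_{i_j}) \in \alpha_{i_j}|_I$, one has $s\cdot t\cdot s' = (sh_{i_1})h_{i_2}\cdots h_{i_{m-1}}(h_{i_m}s')$, where $sh_{i_1}$ and $h_{i_m}s'$ are themselves $M_I$-generators with pairs $(sp_{i_1},sq_{i_1})$, $(p_{i_m}s',q_{i_m}s')$ in $\alpha_{i_1}|_I$, $\alpha_{i_m}|_I$ by compatibility. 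The top rows of $s\cdot t\cdot s'$ are $\gamma$-related by definition of $\gamma''$, so the term condition for $\gamma$ places its last row in $\gamma$; as this holds for every $s,s'$, the last row of $t$ lies in $\gamma''$. Hence $\gamma = \gamma''$, which forces $\theta = \gamma \cup \{(s,s) \st s \in S\}$ and in particular $\theta|_I = \gamma$.

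It remains to verify the term condition for $\theta$ in $S$. Take $t = w(g_1,\dots,g_n) \in M_S$ whose first $2^{n-1}-1$ rows are $\theta$-related; since each $\alpha_i \leq \rho_I$, each defining pair $(a_i,b_i)$ is either diagonal or wholly contained in $I$. If some $k<n$ has $a_k = b_k$, then flipping the $k$-th bit of the last-row index yields an earlier row with identical value, and the hypothesis transfers directly to the conclusion. Otherwise every $(a_k,b_k)$ with $k<n$ is a non-diagonal pair in $\alpha_k|_I$; if in addition $a_n = b_n$ or $w$ avoids $x_n$, the two columns of $t$ agree in every row. In the only remaining case $(a_n,b_n) \in \alpha_n|_I$ and $w$ uses $x_n$, so all generators of $t$ are $M_I$-generators and $t \in M_I$ with entries in $I$; the hypothesis then reads as $\theta|_I = \gamma$-related top rows, and the term condition for $\gamma$ in $I$ delivers a $\gamma$-related, hence $\theta$-related, last row.

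The main obstacle is the closure argument for $\gamma$: congruences of an ideal $I$ need not be stable under multiplication by elements of $S$ outside $I$, but the minimality characterization of $\gamma$ as a commutator, combined with the matrix-scaling trick above, forces this stability and is what enables the reduction from $\theta$-related to $\gamma$-related rows when applying the term condition in the final case.
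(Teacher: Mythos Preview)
Your overall strategy is sound, and the $\gamma''$ argument establishing $S$-stability of $\gamma$ is both correct and genuinely useful---it makes explicit why the congruence of $S$ generated by $\gamma$ restricts back to $\gamma$ on $I$, a point the paper's argument passes over. There is, however, a gap in your final case analysis. The parametrization $t = w(g_1, \dots, g_n)$ with a single pair $(a_i, b_i)$ per index $i$ is not general: an element of $M_S(\alpha_1,\dots,\alpha_n)$ is a product of generators, and distinct factors of the same index $i$ may carry different pairs from $\alpha_i$. Your dichotomy ``$a_k = b_k$'' versus ``$(a_k,b_k) \in \alpha_k|_I$'' thus overlooks the mixed situation where some factors of a given type $k$ are diagonal with value in $S\setminus I$ while others of the same type are non-diagonal. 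In your ``only remaining case'' the product representing $t$ may therefore contain constant factors from $S\setminus I$, which are \emph{not} $M_I$-generators, so the assertion ``all generators of $t$ are $M_I$-generators'' fails as stated.

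The conclusion $t\in M_I$ is nonetheless correct, and the fix is the same absorption idea you already used in the $\gamma''$ step: a constant factor $s$ merges into an adjacent $M_I$-generator, since $(sa,sb)\in\alpha_i|_I$ whenever $(a,b)\in\alpha_i|_I$ and $I$ is an ideal. The paper sidesteps the case analysis entirely by first proving the structural identity
\[ M_S(\alpha_1,\dots,\alpha_n) \;=\; M_I(\alpha_1|_I,\dots,\alpha_n|_I)\ \cup\ \left\{\begin{bmatrix} s&s\\\vdots&\vdots\\s&s\end{bmatrix} \st s\in S\setminus I\right\}, \]
after which the term condition for $\theta$ is immediate: either $t$ is a constant matrix, or $t\in M_I$ and one invokes the term condition for $\gamma=\theta|_I$ in $I$.
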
  
 
\begin{proof}
 The result will follow from
\begin{equation} \label{eq:MIab}
 M_S(\alpha_1,\dots\alpha_n) = M_I(\alpha_1|_I,\dots\alpha_n|_I) \cup \{ \begin{bmatrix} s & s \\ \vdots &\vdots \\  s & s \end{bmatrix} \st s\in S\setminus I \}.
\end{equation} 
 We denote the set on the right hand side of~\eqref{eq:MIab} by $V$. Clearly $V\subseteq M_S(\alpha_1,\dots\alpha_n)$.
 For the converse inclusion, note that every generator of $M_S(\alpha_1,\dots\alpha_n)$ is contained in $V$.
 Further $V$ is closed under multiplication since the set of generators of $M_I(\alpha_1|_I,\dots\alpha_n|_I)$
 is closed under multiplication with elements $\begin{bmatrix} s & s \\ \vdots &\vdots \\  s & s \end{bmatrix}$
 for $s\in S$.
 Hence $M_S(\alpha_1,\dots\alpha_n) \subseteq V$ and~\eqref{eq:MIab} is proved.

 For a congruence $\delta$ of $I$, let $\overline{\delta}$ denote the congruence of $S$ generated by $\delta$. 
 By~\eqref{eq:MIab} we have $C(\alpha_1|_I,\dots\alpha_n|_I,\delta)$ iff $C(\alpha_1,\dots\alpha_n,\overline{\delta})$.
 Since $[\alpha_1|_I,\dots\alpha_n|_I]_I$ is the smallest $\delta$ such that $C(\alpha_1|_I,\dots\alpha_n|_I,\delta)$,
 it follows that $\overline{[\alpha_1|_I,\dots\alpha_n|_I]_I}$ is the smallest $\gamma$ such that
 $C(\alpha_1,\dots\alpha_n,\gamma)$. Hence
\[ [\alpha_1,\dots\alpha_n]_S = \overline{[\alpha_1|_I,\dots\alpha_n|_I]_I}. \]
\end{proof}

 Let $\A$ be an algebra with congruences $\alpha_1,\dots,\alpha_n$, and let $\A_0$ be a reduct of $\A$.
 Then $\alpha_1,\dots,\alpha_n$ are clearly also congruences of $\A_0$,
\[ M_{\A_0}(\alpha_1,\dots,\alpha_n) \subseteq M_\A(\alpha_1,\dots,\alpha_n) \]  
 and consequently
\[ [\alpha_1,\dots,\alpha_n]_{\A_0} \subseteq  [\alpha_1,\dots,\alpha_n]_\A. \]
 The converse inclusion clearly does not hold in general, e.g., the group $(\mathbb{Z}_2,+)$ is abelian but the ring
 $(\mathbb{Z}_2,+,\cdot)$ is not by Example~\ref{ex:sl}.
 Still we have the following convenient equivalence between congruences and their congruences of a group and
 its semigroup reduct.
 
\begin{lem}  \label{lem:sgreduct}
 Let $n\in\N$ and $\alpha_1,\dots,\alpha_n$ be congruences of a group $(G,\cdot,^{-1},1)$. Then
\[ [\alpha_1,\dots,\alpha_n]_{(G,\cdot,^{-1},1)} = [\alpha_1,\dots,\alpha_n]_{(G,\cdot)}. \]
\end{lem}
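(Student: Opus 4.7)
The plan is to prove the stronger equality $M_{(G,\cdot)}(\alpha_1,\ldots,\alpha_n) = M_{(G,\cdot,^{-1},1)}(\alpha_1,\ldots,\alpha_n)$, from which equality of the commutators is immediate since the defining condition~\eqref{eq:htc} becomes literally the same relation on congruences in both structures. One inclusion is automatic because $(G,\cdot)$ is a reduct of $(G,\cdot,^{-1},1)$: the subsemigroup generated by any set is contained in the subgroup generated by it, as already noted in the paragraph preceding the lemma.

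For the reverse inclusion, it suffices to show that the subsemigroup $M_{(G,\cdot)}(\alpha_1,\ldots,\alpha_n)$ is in fact a subgroup of the componentwise group $G^{2^{n-1}\times 2}$, since it then automatically contains $M_{(G,\cdot,^{-1},1)}(\alpha_1,\ldots,\alpha_n)$. First, the all-$1$s matrix (the identity of $G^{2^{n-1}\times 2}$) appears among the generators: just take $a_i=b_i=1$ in the generator of any one position $i\in [n]$. Second, for closure under componentwise inversion, observe that each $\alpha_i$ is a congruence of the \emph{group} $G$, so $(a_i,b_i)\in\alpha_i$ implies $(a_i^{-1},b_i^{-1})\in\alpha_i$. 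The componentwise inverse of a generator of the $i$-th type therefore replaces $a_i,b_i$ by $a_i^{-1},b_i^{-1}$ in the same row pattern, producing another generator of the $i$-th type; and for an arbitrary product $g_1\cdots g_k \in M_{(G,\cdot)}(\alpha_1,\ldots,\alpha_n)$ the identity $(g_1\cdots g_k)^{-1} = g_k^{-1}\cdots g_1^{-1}$ shows that the inverse is again a product of generators.

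I do not foresee a substantive obstacle here: the whole argument reduces to the single observation that group congruences respect inversion, which makes the prescribed set of generators invariant under componentwise inversion, and hence forces the subsemigroup and the subgroup they generate to coincide. Once that is in hand, the $n$-ary commutator, being the smallest congruence satisfying a condition that only refers to the set $M$, must agree on the two structures.
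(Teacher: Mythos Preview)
Your proposal is correct and follows essentially the same approach as the paper: both argue that the two $M$-sets coincide (the paper merely asserts this, you supply the details via closure of the generating set under componentwise inversion) and conclude equality of the commutators from there. The paper also explicitly records that $(G,\cdot,^{-1},1)$ and $(G,\cdot)$ have the same congruence lattice, a fact you use implicitly when passing from equality of the $M$-sets to equality of the commutators.
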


\begin{proof}
 It is well known and not hard to show that the group $(G,\cdot,^{-1},1)$ and its semigroup reduct $(G,\cdot)$
 have exactly the same congruences. Further $M_{(G,\cdot,^{-1},1)}(\alpha_1,\dots,\alpha_n)$ and
 $M_{(G,\cdot)}(\alpha_1,\dots,\alpha_n)$ have the same universe. 
 So the elements of one satisfy the term condition~\eqref{eq:htc} for $\delta$ iff the elements of the other
 do. Hence $[\alpha_1,\dots,\alpha_n]_{(G,\cdot,^{-1},1)} = \delta$ iff $[\alpha_1,\dots,\alpha_n]_{(G,\cdot)} = \delta$.
\end{proof}

\section{Extensions of completely simple ideals}  \label{sec:csi}

 This section consists entirely of the proof of Theorem~\ref{thm:csi}.
 So let $S$ denote a semigroup with completely simple ideal $K$ throughout.
 Then $K$ is the smallest ideal of $S$, hence the kernel of the semigroup (potentially $K=0$).

 We first establish the forward implications of Theorem~\ref{thm:csi} as well as some connections between the
 solvability or (super)nilpotence class of $S$ and its subgroups.
 
\begin{lem} \label{lem:csi}
 Let $d\in\N$ and $S$ a semigroup with a completely simple ideal $K$.
\begin{enumerate}  
\item \label{it:scsi}
 If $S$ is $d$-solvable, then $S^{2^d} = K$ and all subgroups of $K$ are $d$-solvable.
\item \label{it:ncsi}
 If $S$ is left $d$-nilpotent or right $d$-nilpotent or $d$-supernilpotent,
 then $S^{d+1} = K$ and all subgroups of $K$ are $d$-nilpotent.
\end{enumerate}  
\end{lem}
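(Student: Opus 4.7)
My plan is to split the proof into two parts: the subgroup claim (routine) and the structural identity $S^m=K$ for the appropriate exponent $m$ (namely $m=2^d$ for (1) and $m=d+1$ for (2)). The first follows from subsemigroup inheritance combined with Lemma~\ref{lem:sgreduct}; the second is the delicate part.

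For the subgroup claim, fix a subgroup $H\leq K$. Then $H$ is a subsemigroup of $S$, and the inclusion $M_H(\alpha_1|_H,\ldots,\alpha_n|_H)\subseteq M_S(\alpha_1,\ldots,\alpha_n)$ (immediate from comparing generators) shows that $d$-solvability, left $d$-nilpotence, right $d$-nilpotence, and $d$-supernilpotence each pass from $S$ to $H$. Hence $H$ inherits whichever of these is assumed of $S$. By Lemma~\ref{lem:sgreduct}, on the group $H$ the semigroup commutator coincides with the group commutator, and for groups left $d$-nilpotence, right $d$-nilpotence, and $d$-supernilpotence all collapse to classical $d$-nilpotence~\cite{AM:SAHC}; similarly $d$-solvability agrees with the classical notion. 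This yields the required conclusions about every subgroup of $K$.

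For the structural identity, the reverse inclusion $K\subseteq S^m$ is immediate: $K$ is completely simple, hence regular, so each $k\in K$ equals $k\cdot(k'k)\in K\cdot K$ for an inverse $k'$, giving $K=K^2$ and inductively $K=K^n$ for every $n\geq 1$. The forward inclusion $S^m\subseteq K$ is the core of the argument. The naive plan of passing to the Rees quotient $S/\rho_K$, a semigroup with zero where the equality clause of Lemma~\ref{lem:ideals} would give the tight bounds, fails: as Example~\ref{ex:sl} shows, commutator-theoretic properties do not descend to Rees quotients in general. Instead I would argue contrapositively in $S$ itself: assume $s_1\cdots s_m\in S\setminus K$, pick an idempotent $e\in E(K)$, and exploit the fact that $sE\cup Es\subseteq K$ for every $s\in S$ to ``collapse'' all but one designated entry of a carefully chosen element of $M_S(\underbrace{1,\ldots,1}_{n})$ into $K$, leaving $s_1\cdots s_m$ on exactly one side of the final row. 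The collapsing ensures that every pre-final row pair is equal (hence trivially in the zero congruence), while the final row pair is $(k,s_1\cdots s_m)$ with $k\in K$ and $s_1\cdots s_m\notin K$, violating~\eqref{eq:htc} and contradicting the assumed commutator-theoretic property. The exponents $2^d$ and $d+1$ arise from the depth of the iterated binary commutator (via Lemma~\ref{lem:ideals} applied repeatedly with $[\rho_{T^k},\rho_{T^k}]=\rho_{T^{2k}}$) and from the single-step higher commutator bound, respectively.

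The main obstacle is exactly this forward inclusion $S^m\subseteq K$, specifically the matching of exponents. Since quotient arguments are unavailable, one must build the violating matrix inside $S$; ensuring that the constructed matrix actually lies in the generated subalgebra $M_S(1,\ldots,1)$ and that the collapses by $e$ produce the precise pattern of equalities required by~\eqref{eq:htc} is the delicate bookkeeping, which grows in complexity with $d$.
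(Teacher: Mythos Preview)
Your treatment of the subgroup claim and of the inclusion $K\subseteq S^m$ is correct and matches the paper. For the supernilpotent case of the forward inclusion $S^{d+1}\subseteq K$, your plan is also essentially the paper's: the paper builds an explicit product of $d+1$ generators in $M_S(1,\dots,1)$ whose last row is $(a_1\cdots a_{d+1},\,ea_1\cdots a_{d+1})$ and whose other rows are equal, forcing $a_1\cdots a_{d+1}=ea_1\cdots a_{d+1}\in K$ from $[\underbrace{1,\dots,1}_{d+1}]=0$. One detail you do not mention but will need is an auxiliary element $b\in K$ together with the identity $ea_1\cdots a_d b=a_1\cdots a_d b$ (obtained by choosing $e$ in the $\mathcal{H}$-class of $a_1\cdots a_d b$); this is what makes the ``collapsing'' work row by row.

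The gap is in the solvable and left/right nilpotent cases. Your proposed contrapositive builds a violating element of $M_S(1,\dots,1)$, but an element there witnesses only that the \emph{higher} commutator $[\underbrace{1,\dots,1}_{n}]$ is nonzero; it says nothing about the iterated binary commutators $[1]^d$, $(1]^{d+1}$, or $[1)^{d+1}$ that define solvability and nilpotence. To contradict $d$-solvability you would need to exhibit a nontrivial pair in $[1]^d=[[1]^{d-1},[1]^{d-1}]$, which already presupposes control over $[1]^{d-1}$; the problem is inherently recursive. The paper handles these three cases by induction on $d$: set $\alpha$ equal to the penultimate term of the relevant series (so $[\alpha,\alpha]=0$, $[1,\alpha]=0$, or $[\alpha,1]=0$), observe that $S/\alpha$ has class $d-1$ with completely simple ideal $K/\alpha$, and apply the induction hypothesis to get $S^{2^{d-1}}$ (resp.\ $S^d$) inside the $\alpha$-saturation of $K$. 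Then a single $2\times 2$ matrix in $M_S(\alpha,\alpha)$ (resp.\ $M_S(1,\alpha)$ or $M_S(\alpha,1)$), built using an idempotent $e\in E(K)$ with $ae=a$ for suitable $a\in K$, shows $xy=xey\in K$ for $x,y$ in that saturation. Your parenthetical invoking Lemma~\ref{lem:ideals} with an equality $[\rho_{T^k},\rho_{T^k}]=\rho_{T^{2k}}$ does not help here: that equality holds only in semigroups with zero, and even the inequality $\leq$ merely gives $[1]^d\leq\rho_{S^{2^d}}$, which from $[1]^d=0$ yields no information about $S^{2^d}$.
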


\begin{proof}
 Assume that $S$ is $d$-solvable, left/right $d$-nilpotent, $d$-supernilpotent, respectively.
 Then all subsemigroups of $S$ have the same property. By Lemma~\ref{lem:sgreduct} also the subgroups of $K$ in
 the extended group signature $\cdot,^{-1},1$ have the same property. Recall that for a group left $d$-nilpotent,
 right $d$-nilpotent, $d$-supernilpotent and $d$-nilpotent the classical group theory sense are all equivalent.
 Hence it only remains to show that $S^m = K$ for $m\in\N$ as claimed.

\eqref{it:scsi}
 Assume $S$ is $d$-solvable.  
 We show $S^{2^d} \subseteq K$ by induction on the solvable class $d$ of $S$.
 The base case $d=0$ just means that the congruences $1$ and $0$ of $S$ are equal, i.e., $S^1=S=0$ is trivial.

 Consider $d>0$ and $\alpha := [1]^{d-1} \neq 0$, $[\alpha,\alpha]=0$ in the following.
 Then $S/\alpha$ is $d-1$-solvable with $K/\alpha$ a completely simple ideal.
 By induction assumption
\[ S^{2^{d-1}} \subseteq \bigcup_{a\in K} a/\alpha. \] 
 We show that for all $a,b\in K$ and $x,y\in S$ such that $x\, \alpha\, a, y\, \alpha\, b$ we have 
\[ xy\in K. \]
 Since $K$ is completely simple, there exists $e\in E(K)$ such that $ae=a$ and $eb=b$. The congruences
\[ x\, \alpha\, a = ae\, \alpha\, xe,  \quad y\, \alpha\, b = eb\, \alpha\, ey \]
 yield $x\, \alpha\, xe$ and $y\, \alpha\, ey$.
 Hence
 \[ \begin{bmatrix} xe & xe \\  x & x \end{bmatrix} \cdot\begin{bmatrix} y & ey \\  y & ey \end{bmatrix} =
   \begin{bmatrix} xey & xey \\  xy & xey \end{bmatrix} \in M_S(\alpha,\alpha). \]
 Since $[\alpha,\alpha]=0$ by assumption, by~\eqref{eq:tc} equality in the first row of the above matrix yields equality
 in the second, that is, $xy = xey\in K$. Thus $S^{2^d} \subseteq K$ as claimed.

\eqref{it:ncsi}
{\bf Case, $S$ is left $d$-nilpotent:}
 We show $S^{d+1} \subseteq K$ by induction on the nilpotence class $d$ of $S$.
 The base case $d=0$ just means that the congruences $1$ and $0$ of $S$ are equal, i.e., $S^1=S=0$ is trivial.

 Consider $d>0$ and $\alpha := (1]^d \neq 0$, $[1,\alpha]=0$ in the following.
 Then $S/\alpha$ is left $(d-1)$-nilpotent with $K/\alpha$ a completely simple ideal.
 By induction assumption
\[ S^{d} \subseteq \bigcup_{a\in K} a/\alpha. \] 
 We show that for all $a\in K$ and $x,y\in S$ such that $x\, \alpha\, a$ we have 
\[ xy\in K. \]
 Since $K$ is completely simple, there exists $e\in E(K)$ such that $ae=a$. Then
\[ x\, \alpha\, a = ae\, \alpha\, xe. \]
 Hence
 \[ \begin{bmatrix} x & xe \\  x & xe \end{bmatrix} \cdot\begin{bmatrix} e & e \\  y & y \end{bmatrix} =
   \begin{bmatrix} xe & xe \\  xy & xey \end{bmatrix} \in M_S(1,\alpha). \]
 Since $[1,\alpha]=0$ by assumption, by~\eqref{eq:tc} equality in the first row of the above matrix yields equality in the second,
 that is, $xy = xey\in K$. Thus $S^{d+1} \subseteq K$ as claimed.

 {\bf Case, $S$ is right $d$-nilpotent:} This is very similar to the previous case changing only
 $\alpha := [1)^d \neq 0$ and considering
 \[ \begin{bmatrix} xe & xe \\  x & x \end{bmatrix} \cdot\begin{bmatrix} y & ey \\  y & ey \end{bmatrix} =
   \begin{bmatrix} xey & xey \\  xy & xey \end{bmatrix} \in M_S(\alpha,1). \]
 Then the assumption $[\alpha,1]=0$ yields $xy = xey\in K$ by~\eqref{eq:tc} and $S^{d+1} \subseteq K$ again.
 
{\bf Case, $S$ is $d$-supernilpotent:} Let $a_1,\dots,a_{d+1}\in S$, $b\in K$ and $e\in E(K)$ such that
\begin{equation} \label{eq:eab}
 ea_1\dots a_db = a_1\dots a_db.
\end{equation}  
 Note that $e$ can be chosen as the identity of the subgroup of $K$ containing $a_1\dots a_db\in K$.
 Consider the product of the following $d+1$ elements in $M_S(1,\dots,1) \leq S^{2^d \times 2}$, where for $1<i\leq d+1$
 the $i$-th factor has half of its entries $a_i\dots a_db$ and half $a_i$:
 \[  \begin{tiny} \left(\begin{array}{cc}
    a_1 & ea_1 \\
    \vdots & \vdots \\ 
    \vdots & \vdots \\ 
    \vdots & \vdots \\ 
    \vdots & \vdots \\ 
    \vdots & \vdots \\ 
    \vdots & \vdots \\ 
    \vdots & \vdots \\ 
    \vdots & \vdots \\ 
    \vdots & \vdots \\ 
    \vdots & \vdots \\ 
    \vdots & \vdots \\ 
    a_1 & ea_1 \\  
    \end{array}\right)  \cdot
  \left(\begin{array}{cc}
    a_2\dots a_d b &  a_2\dots a_d b \\
    \vdots & \vdots \\ 
    \vdots & \vdots \\ 
    \vdots & \vdots \\ 
    \vdots & \vdots \\ 
    \vdots & \vdots \\ 
     a_2\dots a_d b &  a_2\dots a_d b \\
    a_2 & a_2 \\  
    \vdots & \vdots \\ 
    \vdots & \vdots \\ 
    \vdots & \vdots \\ 
    \vdots & \vdots \\ 
    \vdots & \vdots \\ 
    a_2 & a_2 \\  
    \end{array}\right) \cdot
 \left(\begin{array}{cc}
    a_3\dots a_db & a_3\dots a_db \\
    \vdots & \vdots \\ 
    \vdots & \vdots \\ 
    a_3\dots a_db & a_3\dots a_db \\
    a_3 & a_3 \\  
    \vdots & \vdots \\ 
    \vdots & \vdots \\ 
    a_3 & a_3 \\  
     a_3\dots a_db & a_3\dots a_db \\
    \vdots & \vdots \\ 
    \vdots & \vdots \\ 
    a_3\dots a_db & a_3\dots a_db \\
    a_3 & a_3 \\  
    \vdots & \vdots \\ 
    \vdots & \vdots \\ 
    a_3 & a_3 \\  
   \end{array}\right)\ \dots\
\left(\begin{array}{cc}
    b & b \\
    a_{d+1} & a_{d+1} \\
    b & b \\
    a_{d+1} & a_{d+1} \\
    \vdots & \vdots \\ 
    \vdots & \vdots \\ 
    \vdots & \vdots \\ 
    \vdots & \vdots \\ 
    \vdots & \vdots \\ 
    \vdots & \vdots \\ 
    \vdots & \vdots \\ 
    \vdots & \vdots \\ 
    \vdots & \vdots \\ 
    b & b \\
    a_{d+1} & a_{d+1} \\
   \end{array}\right)\end{tiny}
\]
 A straightforward inductive argument using~\eqref{eq:eab} shows that for any $i>1$,
 the product of the first $i$ factors above has equality in all rows except for the bottom $2^{d+1-i}$ rows,
 which are all of the form $(a_1\dots a_i, ea_1\dots a_i)$.
 In particular, for the full product of length $i=d+1$ the two entries in every row are equal but for the last one,
 which is $(a_1\dots a_{d+1}, ea_1\dots a_{d+1})$.
 Hence the definition of the  $(d+1)$-ary higher commutator~\eqref{eq:htc} yields that
\[ (a_1\dots a_{d+1}, ea_1\dots a_{d+1}) \in  [\underbrace{1,\dots,1}_{d+1}]. \]
Since $S$ is $d$-supernilpotent, this means that
\[ a_1\dots a_{d+1} = ea_1\dots a_{d+1}\in K. \]
Thus $S^{d+1}\subseteq K$ as claimed.
\end{proof}

 For the proofs of the backward implications in Theorem~\ref{thm:csi} we need a few more auxiliary results.
 First we consider the case that all subgroups of $S$ are trivial.
 
\begin{lem} \label{lem:rbi}
 Let $S$ be a semigroup and $m\in\N$ such that $S^m$ is a rectangular band. Then
\begin{enumerate}
\item \label{it:arbi}   
 $S$ is abelian if $m=1$.  
\item \label{it:srbi}
 $S$ is solvable of class at most $\lceil\log_2(m)\rceil+1$ (class at most $\lceil\log_2(m)\rceil$ if $S^m=0$).   
\item \label{it:nrbi}
 $S$ is left and right nilpotent as well as supernilpotent of class at most $2m$ (class at most $m$ if $S^m=0$).
\end{enumerate}  
\end{lem}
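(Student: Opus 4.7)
Part~\ref{it:arbi} is immediate: $S = S^1$ is itself a rectangular band, hence abelian by Example~\ref{exa:rectangular}.

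For part~\ref{it:srbi} I would first establish by induction on $d$, using the base $[1]^0 = \rho_S$ and Lemma~\ref{lem:ideals} applied via $[\rho_{S^{2^d}},\rho_{S^{2^d}}] \leq \rho_{S^{2^{d+1}}}$, that $[1]^d \leq \rho_{S^{2^d}}$. Since rectangular bands satisfy $R^2 = R$, the power $S^m$ is an ideal of $S$ (so $S^k \subseteq S^m$ for every $k \geq m$). Taking $d := \lceil \log_2 m \rceil$ produces $[1]^d \leq \rho_{S^m}$. If $S^m = 0$ this is already $d$-solvability. Otherwise one more step gives $[1]^{d+1} \leq [\rho_{S^m},\rho_{S^m}]_S$, which by Lemma~\ref{lem:MIab} is the $S$-congruence generated by $[1_{S^m},1_{S^m}]_{S^m} = 0_{S^m}$ (using Example~\ref{exa:rectangular} for the abelianness of the rectangular band $S^m$); this equals $0_S$, yielding $(d+1)$-solvability. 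Part~\ref{it:nrbi} begins in the same way: Lemma~\ref{lem:ideals} gives $(1]^{n+1}$, $[1)^{n+1}$ and $[\underbrace{1,\ldots,1}_{n+1}]_S$ all $\leq \rho_{S^{n+1}}$, so when $S^m = 0$ the choice $n+1 = m$ immediately yields class at most $m$ for each of the three notions.

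For the general case of part~\ref{it:nrbi} I would combine the bound $[\underbrace{1,\ldots,1}_{m+1}] \leq \rho_I$ (where $I := S^m$) with a grouping of the $(2m+1)$-ary commutator so as to reduce it to a binary commutator of two higher commutators, each $\leq \rho_I$; Lemma~\ref{lem:MIab} together with the abelianness of $I$ would then collapse it to $0$ and yield $2m$-supernilpotence. Analogous splits around the midpoint of the iterated binary chains $(1]^{d+1}$ and $[1)^{d+1}$ should give left and right $2m$-nilpotence. The main obstacle I foresee is precisely this second phase: once Lemma~\ref{lem:ideals} drops the commutator into $\rho_I$ after $m$ iterations, the naive bound $[1,\rho_I]_S \leq \rho_{SI \cup IS} = \rho_I$ does not improve, so the remaining $m$ steps must come from either a grouping inequality for higher commutators in the spirit of~\cite{AM:SAHC} or an explicit unfolding of $M_S(1,\alpha)$ for $\alpha \leq \rho_I$, exploiting the rectangular-band decomposition $I = L \times R$ and the abelianness of $I$ via Lemma~\ref{lem:MIab} to kill the $L$- and $R$-components separately.
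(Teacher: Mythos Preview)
Your treatment of parts~\ref{it:arbi} and~\ref{it:srbi} is correct and essentially identical to the paper's.

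For part~\ref{it:nrbi} in the general case, however, your primary plan does not work and your fallback is too vague to carry the argument. The grouping inequality you want---something like $[\underbrace{1,\dots,1}_{2m+1}]\leq\bigl[[\underbrace{1,\dots,1}_{m+1}],[\underbrace{1,\dots,1}_{m}]\bigr]$---is a theorem for Mal'cev (and more generally congruence modular) varieties, but semigroups are not congruence modular and the paper explicitly warns that higher commutators here lack such structural properties. There is no reason to expect this inequality to hold, and without it your reduction to Lemma~\ref{lem:MIab} collapses. The same objection applies to the ``splits around the midpoint'' for the iterated binary chains.

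The paper handles the two halves of~\ref{it:nrbi} by quite different means, neither of which is a grouping trick. For supernilpotence it observes that since $S^m$ is a rectangular band, $S$ satisfies the identity $x_1\cdots x_m\, y\, z_1\cdots z_m = x_1\cdots x_m\, z_1\cdots z_m$; hence every term operation of $S$ has essential arity at most $2m$, and the $(2m{+}1)$-ary term condition~\eqref{eq:htc} is satisfied trivially. For left and right nilpotence the paper does use the decomposition $K\cong L\times R$ you mention, but it builds an explicit central series: with $\lambda,\rho\leq\rho_K$ the congruences corresponding to the $L$- and $R$-factors, it defines $\alpha_i:=\Cg_S\bigl((sa,sb):s\in S^i,\ a,b\in K\bigr)\vee\lambda$ (and dually $\beta_i$), verifies by a direct computation in $M_S(1,\alpha_i)$ and $M_S(\alpha_i,1)$ that $[1,\alpha_i],[\alpha_i,1]\leq\alpha_{i+1}$, and then takes the series $\rho_S\geq\rho_{S^2}\geq\dots\geq\rho_K\geq\alpha_1\wedge\beta_1\geq\dots\geq\alpha_m\wedge\beta_m=0$. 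Your hint about ``killing the $L$- and $R$-components separately'' is pointing in this direction, but the actual work is in defining the $\alpha_i$ and checking the centralizing step by hand, not in invoking Lemma~\ref{lem:MIab}.
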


\begin{proof}
 Let $K := S^m$ be a rectangular band.

\eqref{it:arbi} follows from Example~\ref{exa:rectangular}.
 
\eqref{it:srbi} Let $\ell := \lceil\log_2(m)\rceil$. We claim that the series of Rees congruences
\[ 1_S = \rho_S \geq \rho_{S^2} \geq \rho_{S^4} \geq \dots \rho_{S^{2^\ell}} = \rho_{K} \geq 0_S \]
 has abelian sections. For that note that $[\rho_{S^i},\rho_{S^i}] \leq \rho_{S^{2i}}$ for all $i\geq 1$ by 
 Lemma~\ref{lem:ideals}.
 If $K=0$, then $\rho_K$ is already trivial and $S$ is solvable of class at most $\lceil\log_2(m)\rceil$.
 In any case, since $[1_K,1_K]_K = 0_K$ by~\eqref{it:arbi}, Lemma~\ref{lem:MIab} yields $[\rho_K,\rho_K] = 0_S$.
 Hence $S$ is solvable of class  at most $\ell+1$.

 \eqref{it:nrbi}
 {\bf Claim $S$ is left and right nilpotent:}
 Since rectangular bands are direct product of left zero and right zero semigroups,
 $S$ has congruences $\lambda,\rho \leq \rho_K$ such that $K\cong K/\lambda|_K \times K/\rho|_K$ for left zero
 $K/\lambda|_K$ and right zero $K/\rho|_K$.
 Define $\alpha_0 := \rho_K$ and 
\[ \alpha_i := \Cg_S( (sa,sb) \st s\in S^i, a,b\in K ) \vee \lambda \]
 the congruence of $S$ generated by the pairs $(sa,sb)$ for $s\in S^i, a,b\in K$, $i\in [m]$ together with $\lambda$.
 Note that $\alpha_m=\lambda$ since $S^m = K$.

 We claim that
\begin{equation} \label{eq:1ai}
 [1_S,\alpha_i],\, [\alpha_i,1_S] \leq\alpha_{i+1} \text{ for all } i\leq m.
\end{equation}
 First, since $K/\lambda$ is left zero, every non-generator in $M_S(1_s,\alpha_i)$ is $\lambda$-congruent to
\[ \begin{bmatrix} a & a \\ b & b \end{bmatrix} \begin{bmatrix} c & d \\ c & d \end{bmatrix} \equiv_\lambda \begin{bmatrix} ac & ad \\ bc & bd \end{bmatrix} \]
 for some $a,b\in S, (c,d)\in\alpha_i$. By the definition of $\alpha_i$, we have $n\in\N, s_1,\dots,s_n\in S^i$
 and $x_1,\dots,x_{n+1}\in K$ such that
\begin{equation} \label{eq:csx}
 c  = s_1x_1,\ s_1x_2 \equiv_\lambda s_2x_3,\ \dots, s_{n-1}x_{n-1} \equiv_\lambda s_nx_n,\ s_nx_{n+1} = d.
\end{equation}
 Multiplying this chain on the left by $a, b$, respectively, we see that $(ac,ad),(bc,bd)\in\alpha_{i+1}$.
 Hence 
 $[1_S,\alpha_i]\leq\alpha_{i+1}$ by~\eqref{eq:tc}.

 Second, every non-generator in $M_S(\alpha_i,1_S)$ is $\lambda$-congruent to
\[ \begin{bmatrix} a & b \\ a & b \end{bmatrix} \begin{bmatrix} c & c \\ d & d \end{bmatrix} \equiv_\lambda \begin{bmatrix} ac & bc \\ ad & bd \end{bmatrix}  \]
 for some $a,b\in S, (c,d)\in\alpha_i$. In particular we have a chain as in~\eqref{eq:csx}.
 Multiplying this on the left by $a,b$, respectively, we see that $(ac,ad),(bc,bd)\in\alpha_{i+1}$.
 
 Now assume that $(ac,bc) \in\alpha_{i+1}$. Then  $(ad,bd) \in\alpha_{i+1}$ follows and we have
 $[\alpha_i,1_S]\leq\alpha_{i+1}$ by~\eqref{eq:tc}. Thus~\eqref{eq:1ai} is proved.

 Dually for $\beta_0 := \rho_K$,
\[ \beta_i := \Cg_S( (as,bs) \st s\in S^i, a,b\in K ) \vee \rho \]
 for $i\in [m]$ and $\beta_m = \rho$, we have
\begin{equation} \label{eq:1bi}
 [1_S,\beta_i],\, [\beta_i,1_S] \leq\beta_{i+1} \text{ for all } i\leq m.
\end{equation}

 We claim that 
\[ 1_S =\rho_S \geq \rho_{S^2} \geq \dots \rho_{S^m} = \rho_K \geq \alpha_1\wedge\beta_1 \geq \dots \geq \alpha_m\wedge\beta_m = \lambda \wedge \rho = 0_S \] 
 is a (left and right) central series.
 For that note that $[1_S,\rho_{S^i}]=[\rho_{S^i},1_S] \leq \rho_{S^{i+1}}$ for all $i\geq 1$ by 
 Lemma~\ref{lem:ideals}.
 If $K=0$, then $\rho_K$ is already trivial and $S$ is nilpotent of class at most $m$.
 In any case,
\[ [1_S,\alpha_i\wedge\beta_i],\, [\beta_i\wedge\alpha_i,1_S]\leq\alpha_{i+1}\wedge\beta_{i+1} \]
 by~\eqref{eq:1ai}, ~\eqref{eq:1bi} and the monotonicity of the commutator.
 Hence $S$ is left and right nilpotent of class  at most $2m$.

 {\bf Claim $S$ is supernilpotent:}
  Since $S^m=K$ is a rectangular band, $S$ satisfies
\[ x_1\dots x_myz_1\dots z_m = x_1\dots x_mz_1\dots z_m \]
 for all $ x_1,\dots, x_m,y,z_1,\dots,z_m \in S$. Hence all term operations of $S$ have essential arity
 at most $2m$. It follows from~\eqref{eq:htc} that every higher commutator of $2m+1$ congruences is trivial.
 In particular $S$ is $2m$-supernilpotent. 
\end{proof}

 In the general case $S$ has a completely simple ideal $K$.
 Then $K$ is isomorphic to a \emph{Rees matrix semigroup} $M(G;I,\Lambda;P)$ for a group $G$, sets $I,\Lambda$ and
 $P\in G^{\Lambda\times I}$ by an isomorphism
\[ K \to I\times G \times\lambda,\ x\mapsto (x_I,x_G,x_\lambda). \]
 Recall that the multiplication in $M(G;I,\Lambda;P)$ is defined by
\[ (i,g,\lambda)(j,h,\mu) := (i,g P_{\lambda j} h,\mu) \quad \text{ for }  (i,g,\lambda),(j,h,\mu)\in I\times G\times\Lambda. \]
For a normal subgroup $N$ of $G$
 \[ \sigma_N := \{ (a,b)\in K^2 \st a_I = b_I, a_\Lambda = b_\Lambda, a_G \equiv_N b_G \} \cup \{(s,s)\st s\in S\} \]
 is a congruence of $S$ contained in the Rees congruence $\rho_K$.
 To see that let $(a,a'), (b,b') \in\sigma_N$ and consider the following cases:
\begin{enumerate} 
\item $a,b\in S\setminus K$: Then $a=a'$ and $b=b'$ implies $ab\ \sigma_N\ bb'$.
\item \label{it:abK}
 $a,b\in K$: From the definition of the Rees matrix semigroup, it is straightforward that
 $ab\ \sigma_N\ a'b'$.
\item $a\in K, b\in S\setminus K$: Let $e$ be the identity in the group $\mathcal{H}$-class $H_a=H_{a'}$ of $S$.
 Since $eb=eb'$ is in $K$, case~\eqref{it:abK} yields 
 $$ab = aeb = a(eb')\ \sigma_N\ a'(eb') = a'b'.$$
\item  $a\in S\setminus K, b \in K$: This case is dual to the previous one. 
\end{enumerate}
 This completes the proof that $\sigma_N$ is a congruence on $S$. Clearly $\sigma_N \leq \rho_K$.

 We reduce the commutators of congruences of the form above to classical commutators $[M,N]$ and $[G,N]$
 of normal subgroups of $G$.
 
\begin{lem} \label{lem:sN}
 Let $S$ be a semigroup with a completely simple ideal $K \cong M(G;I,\Lambda;P)$
 for a group $G$, sets $I,\Lambda$ and $P\in G^{\Lambda\times I}$. Let $M,N$ be normal subgroups of $G$.
 Then 
\begin{enumerate}
\item \label{it:sMsN}
 $[\sigma_M,\sigma_N] = \sigma_{[M,N]}$,
\item \label{it:1SsN}
 $[1_S,\sigma_N] = [\sigma_N,1_S] = \sigma_{[G,N]}.$
\end{enumerate}  
\end{lem}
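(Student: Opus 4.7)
The plan is to reduce both parts of the lemma to commutator computations inside the completely simple ideal $K \cong M(G; I, \Lambda; P)$, where the relevant congruences are indexed by normal subgroups of $G$ and the binary commutators can be computed directly from the Rees matrix structure (or taken from~\cite{RM:CCS}).

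For part~\eqref{it:sMsN}, since $\sigma_M, \sigma_N \leq \rho_K$, Lemma~\ref{lem:MIab} applies directly and gives that $[\sigma_M, \sigma_N]_S$ is the $S$-congruence generated by $[\sigma_M|_K, \sigma_N|_K]_K$. A direct computation in the Rees matrix semigroup (or the analysis of~\cite{RM:CCS}) shows that the $G$-coordinate differences appearing in $M_K(\sigma_M|_K, \sigma_N|_K)$ are exactly products of conjugates of elements from $M$ and $N$, yielding $[\sigma_M|_K, \sigma_N|_K]_K = \sigma_{[M,N]}|_K$. Since $\sigma_{[M,N]}$ is already a congruence of $S$ restricting to $\sigma_{[M,N]}|_K$ and trivial off $K$, it is the $S$-congruence generated, so $[\sigma_M, \sigma_N]_S = \sigma_{[M,N]}$.

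For part~\eqref{it:1SsN}, the obstacle is that $1_S \not\leq \rho_K$, so Lemma~\ref{lem:MIab} does not apply directly; I would prove both inclusions by relating $M_S(1_S, \sigma_N)$ to $M_K(1_K, \sigma_N|_K)$. The key reduction is: if $X \in M_S(1_S, \sigma_N)$ has $X_{11} \neq X_{12}$, then the product expression for $X$ must contain some non-trivial $\sigma_N$-generator $\begin{bmatrix} c & d \\ c & d \end{bmatrix}$ with $c \neq d$ in $K$. Because $(c,d) \in \sigma_N$ forces $c$ and $d$ to share $I$ and $\Lambda$ coordinates, the idempotent $e$ at the identity of their common group $\mathcal{H}$-class satisfies $ec = c$ and $ed = d$. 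This lets me rewrite any adjacent $1_S$-generator $\begin{bmatrix} a & a \\ b & b \end{bmatrix}$ as $\begin{bmatrix} ae & ae \\ be & be \end{bmatrix}$ with $ae, be \in K$, and analogously on the other side using a right-absorbing idempotent (which exists by the same coordinate-sharing). Iterating this absorption rewrites $X$ entirely as a product of $M_K$-generators, so $X \in M_K(1_K, \sigma_N|_K)$.

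Using this, $[1_S, \sigma_N] \leq \sigma_{[G,N]}$ follows by verifying $C(1_S, \sigma_N, \sigma_{[G,N]})$: the trivial case $X_{11} = X_{12}$ aside, the absorption reduces the TC check to the $K$-case $[1_K, \sigma_N|_K]_K = \sigma_{[G,N]}|_K$, which holds by a Rees matrix computation using normality of $[G, N]$ (the relevant difference $(bc)_G(bd)_G^{-1}$ is a $G$-conjugate of $(ac)_G(ad)_G^{-1}$). Conversely, $\sigma_{[G,N]} \leq [1_S, \sigma_N]$: any congruence $\delta$ of $S$ with $C(1_S, \sigma_N, \delta)$ yields $C(1_K, \sigma_N|_K, \delta|_K)$ on $K$ because $M_K(1_K, \sigma_N|_K) \subseteq M_S(1_S, \sigma_N)$ with all entries in $K$, hence $\sigma_{[G,N]}|_K = [1_K, \sigma_N|_K]_K \leq \delta|_K \leq \delta$, and since $\sigma_{[G,N]}$ is the identity off $K$, $\sigma_{[G,N]} \leq \delta$. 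The argument for $[\sigma_N, 1_S] = \sigma_{[G,N]}$ is symmetric, interchanging left- and right-absorbing idempotents. The main obstacle is justifying the absorption step: this rests on the fact that a non-trivial $\sigma_N$-pair $(c,d)$ determines a unique $\mathcal{H}$-class containing both $c$ and $d$, so a single idempotent works simultaneously for all four matrix entries.
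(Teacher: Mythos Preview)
Your approach is essentially the paper's: part~\eqref{it:sMsN} via Lemma~\ref{lem:MIab} and \cite[Theorem~1.1]{RM:CCS}, and part~\eqref{it:1SsN} via the same idempotent-absorption idea to reduce $M_S(1_S,\sigma_N)$ to $M_K(1_K,\sigma_N|_K)$. The paper packages the absorption as the explicit set equality
\[
M_S(1_S,\sigma_N)=M_K(1_K,\sigma_N|_K)\cup\Bigl\{\begin{bmatrix} a & a\\ b & b\end{bmatrix}: a,b\in S\Bigr\},
\]
proved by checking that the right-hand side is closed under multiplication; your ``iterating this absorption'' is the same mechanism, just phrased element-by-element.

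There is, however, a real slip in your case split when verifying $C(1_S,\sigma_N,\sigma_{[G,N]})$: the case $X_{11}=X_{12}$ is \emph{not} trivial. It can happen that $X_{11}=X_{12}$ while the product expression for $X$ still contains a non-trivial $\sigma_N$-generator and $X_{21}\neq X_{22}$; for instance already with $S=K=G$ a non-abelian group and $N=G$ one has
\[
\begin{bmatrix} g & 1\\ g & 1\end{bmatrix}\begin{bmatrix}1&1\\ h&h\end{bmatrix}\begin{bmatrix}1&g\\ 1&g\end{bmatrix}=\begin{bmatrix} g & g\\ gh & hg\end{bmatrix}\in M_S(1_S,\sigma_N).
\]
Knowing only $(X_{21},X_{22})\in\sigma_N$ does not yield $(X_{21},X_{22})\in\sigma_{[G,N]}$. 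The correct dichotomy---which your own absorption argument already supports---is on whether the product expression contains a non-trivial $\sigma_N$-generator: if not, both columns coincide in \emph{both} rows and the term condition is vacuous; if so, absorption places $X$ in $M_K(1_K,\sigma_N|_K)$ and the $K$-case from \cite{RM:CCS} applies. This is precisely what the paper's set equality encodes, and adopting that formulation closes the gap cleanly.
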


\begin{proof}
\eqref{it:sMsN} By Lemma~\ref{lem:MIab} $[\sigma_M,\sigma_N]_S$ is generated by $[\sigma_M|_K,\sigma_N|_K]_K$.
 Since the latter is $\sigma_{[M,N]}|_K$ by~\cite[Theorem 1.1]{RM:CCS}, the result follows.

\eqref{it:1SsN} First we show
\begin{equation} \label{eq:MK1s}
 M_S(1_S,\sigma_N) = M_K(1_K,\sigma_N|_K) \cup \{ \begin{bmatrix} a & a \\ b & b \end{bmatrix} \st a,b\in S \}.
\end{equation} 
 We denote the set on the right hand side of~\eqref{eq:MK1s} by $V$. Clearly $V\subseteq M_S(1_S,\sigma_N)$.
 For the converse inclusion, note that every generator of $M_S(1_S,\sigma_N)$ is contained in $V$.
 Further $V$ is closed under multiplication since the product of a generator of $M_K(1_K,\sigma_N|_K)$ with an element
 $\begin{bmatrix} a & a \\  b & b \end{bmatrix}$ for $a,b\in S$ is always in $M_K(1_K,\sigma_N|_K)$:
 For $(c,d)\in\sigma_N \cap (K\times K)$, let $e$ be the unique idempotent in the group $\mathcal{H}$-class $H_c=H_d$ of S.
 Then
 \[ \begin{bmatrix} a & a \\  b & b \end{bmatrix} \cdot\begin{bmatrix} c & d \\  c & d \end{bmatrix} =
   \underbrace{\begin{bmatrix} ae & ae \\  be & be \end{bmatrix}}_{\in K^{2\times 2}} \cdot\begin{bmatrix} c & d \\  c & d \end{bmatrix} \in M_K(1_K,\sigma_N|_K), \]
 \[ \begin{bmatrix} c & d \\  c & d \end{bmatrix} \cdot \begin{bmatrix} a & a \\  b & b \end{bmatrix}=
    \begin{bmatrix} c & d \\  c & d \end{bmatrix} \cdot \underbrace{\begin{bmatrix} ea & ea \\  eb & eb \end{bmatrix}}_{\in K^{2\times 2}} \in M_K(1_K,\sigma_N|_K). \]
 The remaining cases are trivial.
 So $M_S(1_S,\sigma_N) \subseteq V$ and~\eqref{eq:MK1s} is proved.

 By~\cite[Theorem 1.1]{RM:CCS} we have $[1_K,\sigma_N|_K] = \sigma_{[G,N]}|_K$. 
 Together with~\eqref{eq:MK1s} and~\eqref{eq:tc} this implies 
 $[1_S,\sigma_N] \leq \sigma_{[G,N]}$.
 The converse inclusion is clear and so we have equality.

 The proof that $[\sigma_N,1_S] = \sigma_{[N,G]} = \sigma_{[G,N]}$ is symmetric. 
\end{proof}

 Now we are ready to show the backward implications of Theorem~\ref{thm:csi} together with some
 connections between solvability class, nilpotence class, respectively, of $S$ and of its subgroups.
 Note that for $n=0$, all subgroups of $K$ are trivial and the following specializes to Lemma~\ref{lem:rbi}.

\begin{lem} \label{lem:csib}
 Let $m,n\in\N$ and $S$ a semigroup with a completely simple ideal $K$ such that $S^m = K$.
\begin{enumerate}  
\item \label{it:scsib}
 If all subgroups of $K$ are $n$-solvable, then $S$ is solvable of class at most $\lceil \log_2(m)\rceil+n+1$.
\item \label{it:ncsib}
 If all subgroups of $K$ are $n$-nilpotent, then $S$ is left and right nilpotent of class at most $2m+n$
 as well as supernilpotent of class at most $2m\cdot\max(n,1)$.
\end{enumerate}  
\end{lem}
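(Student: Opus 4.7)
The plan is to extend the series used in Lemma~\ref{lem:rbi} past the rectangular-band barrier by inserting the congruences $\sigma_N$ for normal subgroups $N\trianglelefteq G$, where $K\cong M(G;I,\Lambda;P)$ is a Rees matrix representation; note that $K/\sigma_G|_K$ is the rectangular band $I\times\Lambda$. The main computational tools are Lemma~\ref{lem:ideals} (for commutators of Rees congruences $\rho_{S^i}$), Lemma~\ref{lem:MIab} (to push commutators of subcongruences of $\rho_K$ into the ideal), and Lemma~\ref{lem:sN} (to translate $[\sigma_M,\sigma_N]$ and $[1_S,\sigma_N]$ into the group commutators $[M,N]$ and $[G,N]$).

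For \eqref{it:scsib}, write the derived series $G=G^{(0)}\geq G^{(1)}\geq\cdots\geq G^{(n)}=\{1\}$ and put $\ell:=\lceil\log_2(m)\rceil$. Consider the chain
\[ 1_S=\rho_S\geq\rho_{S^2}\geq\rho_{S^4}\geq\cdots\geq\rho_{S^{2^\ell}}=\rho_K\geq\sigma_G\geq\sigma_{G^{(1)}}\geq\cdots\geq\sigma_{G^{(n)}}=0_S. \]
Its top sections are abelian by Lemma~\ref{lem:ideals}; the transition $[\rho_K,\rho_K]\leq\sigma_G$ holds by Lemma~\ref{lem:MIab} together with Example~\ref{exa:rectangular} applied to the rectangular band $K/\sigma_G|_K$; and the bottom sections satisfy $[\sigma_{G^{(i)}},\sigma_{G^{(i)}}]=\sigma_{G^{(i+1)}}$ by Lemma~\ref{lem:sN}\eqref{it:sMsN}. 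This gives solvability class at most $\ell+1+n$.

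For the left and right nilpotence in \eqref{it:ncsib}, apply Lemma~\ref{lem:rbi}\eqref{it:nrbi} to $\bar S:=S/\sigma_G$, whose $m$-th power is the rectangular band $K/\sigma_G|_K$, and lift the resulting central series of length $2m$ back to $S$ via the quotient map; this produces a (left, respectively right) central chain in $S$ of length $2m$ ending at $\sigma_G$ rather than $0_S$. Append the lower central series of $G$: the chain $\sigma_G\geq\sigma_{\gamma_2(G)}\geq\cdots\geq\sigma_{\gamma_{n+1}(G)}=0_S$ has each step central in $S$ because $[1_S,\sigma_{\gamma_i(G)}]=[\sigma_{\gamma_i(G)},1_S]=\sigma_{\gamma_{i+1}(G)}$ by Lemma~\ref{lem:sN}\eqref{it:1SsN}. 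Total left/right nilpotence class: at most $2m+n$.

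The supernilpotence bound is the main obstacle. Applying Lemma~\ref{lem:rbi}\eqref{it:nrbi} to $\bar S=S/\sigma_G$ only yields $[\underbrace{1_S,\ldots,1_S}_{2m+1}]\leq\sigma_G$; the plan is to refine this inductively to
\[ [\underbrace{1_S,\ldots,1_S}_{2mi+1}]\leq\sigma_{\gamma_i(G)} \]
for $i=1,\ldots,n$, so that taking $i=n$ and applying one further commutator (using $\gamma_n(G)\leq Z(G)$ and Lemma~\ref{lem:sN}\eqref{it:1SsN}) gives $0_S$ after at most $2m\cdot\max(n,1)$ arguments. The inductive step requires a higher-commutator analogue of Lemma~\ref{lem:sN}\eqref{it:1SsN} of the shape
\[ [\underbrace{1_S,\ldots,1_S}_k,\sigma_N]\leq\sigma_{[G,\ldots,G,N]}, \]
established by the same matrix analysis as in Lemma~\ref{lem:sN}: show that each generator of $M_S(1_S,\ldots,1_S,\sigma_N)$ either already lies in $M_K(1_K,\ldots,1_K,\sigma_N|_K)$ or becomes so after multiplying the $\sigma_N$-factor by a suitable idempotent of $K$, and then invoke the higher-commutator formula for $M(G;I,\Lambda;P)$ from~\cite[Theorem~1.1]{RM:CCS}. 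The multiplicative factor $2m$ per layer of the lower central series reflects the $2m$ arguments of $1_S$ that are needed to push a product into $\rho_K$ before the ideal structure can be leveraged.
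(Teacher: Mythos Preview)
Your treatment of \eqref{it:scsib} and of left/right nilpotence in \eqref{it:ncsib} matches the paper's proof.

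For supernilpotence your inductive scheme has a real gap. To pass from the hypothesis $[\underbrace{1_S,\ldots,1_S}_{2mi+1}]\leq\sigma_{\gamma_i(G)}$ to the next level via your proposed analogue $[\underbrace{1_S,\ldots,1_S}_{k},\sigma_N]\leq\sigma_{[G,\ldots,G,N]}$, you implicitly need an inequality of the form
\[
[\underbrace{1_S,\ldots,1_S}_{2m(i+1)+1}] \;\leq\; \bigl[\underbrace{1_S,\ldots,1_S}_{2m},\,[\underbrace{1_S,\ldots,1_S}_{2mi+1}]\,\bigr],
\]
bounding a flat higher commutator by a nested one. Such nesting holds in Mal'cev varieties (where higher commutators coincide with iterated binary ones), but it is \emph{not} available for semigroups; the paper itself remarks that no reduction of higher commutators to iterated binary commutators is known in this setting. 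Monotonicity only gives the wrong direction: since $\sigma_{\gamma_i(G)}\leq 1_S$, one gets $[\underbrace{1_S,\ldots,1_S}_{2m},\sigma_{\gamma_i(G)}]\leq[\underbrace{1_S,\ldots,1_S}_{2m+1}]$, which is useless here. (A secondary issue: the absorption argument you sketch for the higher analogue is also more delicate than in Lemma~\ref{lem:sN}, because products of several generators coming from the $1_S$ slots need not meet $K$ until the single $\sigma_N$-generator appears.)

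The paper avoids nesting entirely. It works directly with $M_S(1,\ldots,1)\leq S^{[2]^k}$ for $k=2mn+1$: any product of at least $m$ generators lands in $K^{[2]^k}$ after inserting suitable idempotents, and projecting onto the $G$-coordinate yields an element of the subgroup of $G^{[2]^k}$ generated by functions depending on at most $2m$ of the $k$ coordinates. Commutator collection in the $n$-nilpotent group $G$ then rewrites any such element as a product of functions each depending on at most $2mn<k$ coordinates, and a difference-operator calculation verifies~\eqref{eq:htc} directly. No inductive descent through the chain $\sigma_{\gamma_i(G)}$ is used for this part.
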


\begin{proof}
  Let $K$ be isomorphic to the Rees matrix semigroup $M(G;I,\Lambda;P)$ for some group $G$, sets $I,\Lambda$ and
  $P\in G^{\Lambda \times I}$.

 \eqref{it:scsib}
 Let $\ell := \lceil \log_2(m)\rceil$. By Lemma~\ref{lem:rbi}\eqref{it:srbi} $S/\sigma_G$ is solvable of class at most $\ell+1$.
 Assume $G$ is solvable of class $n$, that is, its derived series is
 \[ G = G^{(0)} \geq G^{(1)}  \geq \dots \geq G^{(n)} = 1, \]
 where $G^{(i+1)} := [G^{(i)},G^{(i)}]$ is the classical group commutator for $i\in\N$. Then
\[ 1_S =\rho_S \geq \rho_{S^2} \geq \dots \geq \rho_{S^{2^\ell}} = \rho_K \geq \sigma_G \geq \sigma_{G^{(1)}} \geq \dots \geq \sigma_{G^{(n)}} = 0_S \]  
 is an abelian series of $S$ by Lemma~\ref{lem:sN}\eqref{it:sMsN}.
 So $S$ is solvable of class $\leq \ell+1+n$.

\eqref{it:ncsib}
 Assume $G$ is nilpotent of class $n$, that is, its lower central series is
\[ G = \gamma_1 G \geq \gamma_2 G  \geq \dots \geq \gamma_{n+1} G = 1, \]
 where $\gamma_{i+1} G := [\gamma_iG,G]$  is the classical group commutator for $i\geq 1$.

{\bf Claim $S$ is nilpotent:} Consecutive quotients in
\[ \sigma_G \geq \sigma_{\gamma_2 G} \geq \dots \geq \sigma_{\gamma_{n+1} G} = 0_S \]  
 are central in $S$ by Lemma~\ref{lem:sN}\eqref{it:1SsN}.
 By Lemma~\ref{lem:rbi}\eqref{it:nrbi} $S/\sigma_G$ is left and right nilpotent of class at most $2m$.
 So $S$ is left and right nilpotent of class $\leq 2m+n$.

 {\bf Claim $S$ is supernilpotent:} For $n=0$, the result is just Lemma~\ref{lem:rbi}\eqref{it:nrbi}.
 So we assume $n\geq 1$ in the following.
 The bulk of the proof consists of an analysis of elements in $M_S(1,\dots,1)$, for which we introduce a more versatile
 notation for its generators. For $\ell\leq k$, $1\leq i_1 < \dots < i_\ell \leq k$ and $f\colon [2]^\ell\to S$, define
\[ f^k_{\{i_1,\dots,i_\ell\}}\colon [2]^k\to S,\ (x_1,\dots,x_k) \mapsto f(x_{i_1},\dots,x_{i_\ell}). \]
 We note that $f^k_{\{i_1,\dots,i_\ell\}}$ is a $k$-ary \emph{minor} of $f$, which only depends on $\ell$ of its arguments.
 Since $k$ will be fixed throughout the proof below, we will also use the shorthand notation $f_A$ with
 $A:=\{i_1,\dots,i_\ell\}$.
 
 We can consider $M_S(1,\dots,1)$ as subsemigroup of $S^{[2]^k}$ that is generated by
\[ \{ f^k_{\{i\}} \st i\in [k], f\in S^{[2]} \}. \]
 Further the $k$-ary commutator $[1,\dots,1] = 0$ iff every $g\in M_S(1,\dots,1) \leq S^{[2]^k}$ satisfies
 the term condition (cf.~\eqref{eq:htc})
\begin{equation} \label{eq:TCk}
  \left(  g(x0)=g(x1) \text{ for all } x\in [2]^{k-1}\setminus \{(1,\dots,1)\} \right) \quad \Rightarrow \quad g(1,\dots,1,0)=g(1,\dots,1,1).
\end{equation}
 Here $x0, x1$ denote the $k$-tuples obtained by appending $0, 1$, respectively, to $x$.
 Note that the product of any $\ell< k$ generators of $M_S(1,\dots,1)$ is of the form $f^k_A$ for some
 $A\subseteq [k]$, $|A|=\ell$, and $f\in S^{[\ell]}$.
 Any such product of less than $k$ generators satisfies~\eqref{eq:TCk} trivially.

 For the remainder of the proof fix $k := 2mn+1$. For showing that $S$ is supernilpotent of class at most $k-1$,
 it clearly suffices to verify~\eqref{eq:TCk} for all elements $g$ of $M_S(1,\dots,1) \leq S^{[2]^{k}}$ that are
 products of $\ell\geq m$ generators of the form $f_{\{i\}}$, say
\[ g = a_1 \dots a_\ell. \] 
 Note that for any $j\in\{0,\dots,m-\ell\}$ the product $b_j := a_{j+1}\dots a_{j+m}$ is in $K$ by the assumption
 $S^m = K$.
 In particular, $b_j = g^{(j)}_{A_j}$ for some $A_j\subseteq [k]$ with $|A_j| = m$ and some $g^{(j)}\in K^{[2]^m}$.
 For $j\in [m-\ell]$, let $e_j\in K^{[2]^k}$ be a right identity of $b_j$. Then $e_j = h^{(j)}_{A_j}$ for some 
 $h^{(j)}\in E(K)^{[2]^m}$ and
\[ g =  a_1 \dots a_m   \prod_{j=1}^{m-\ell} (a_{m+j} e_j) =  g_{A_1} \prod_{j=1}^{m-\ell} (a_{m+j}h^{(j)}_{A_j}). \]
 Note that by the construction, for any $j\in [m-\ell]$, the product $a_{m+j}h^{(j)}_{A_j}$ is of the form
 $u^{(j)}_{A_j}$ for some $u^{(j)}\in K^{[2]^m}$.
 Thus $g$ is in the subsemigroup
\[ U := \langle u_A \st A\subseteq [k], |A| = m, u\in K^{[2]^m} \rangle \leq  K^{[2]^k}. \]
 In other words
\[ M_S(1,\dots,1) \leq U \cup \{ h_A \st A\subseteq [k], |A| = m-1, h\in S^{[2]^{m-1}} \}. \]
 As mentioned above it suffices to show that every $g\in U$ satisfies~\eqref{eq:TCk}.
 The projection of $U$ on its $I\times\Lambda$-components of $K$ yields a rectangular band,
 which is abelian and clearly satisfies~\eqref{eq:TCk}.
 So it only remains to consider the projection $V \leq G^{[2]^k}$ of $U$ on to the group $G$.
 Since the multiplication of two elements of $M(G;I,\Lambda;P)$ inserts elements from the sandwich matrix $P$,
 we obtain that $V$ is contained in the subgroup
\[ W := \langle h_A \st A\subseteq [k], |A| = 2m, h\in G^{[2]^{2m}} \rangle \leq G^{[2]^k}. \]
 The result follows once we have shown that every $g\in W$ satisfies~\eqref{eq:TCk}.

 For this purely group theoretic statement, define a \emph{difference operator} $\Delta$ that maps any $p$-ary
 function $u\colon [2]^p\to G$ to the $p-1$-ary function
 \[ \Delta u\colon [2]^{p-1}\to G,\ (x_1,\dots,x_p-1) \mapsto u(x_1,\dots,x_{p-1},0)^{-1}u(x_1,\dots,x_{p-1},1). \]
 We claim that $k$ applications of $\Delta$ to any $g\in W$ yields the constant $1$ function, that is,
\begin{equation} \label{eq:Dkg}
 \Delta^k g = 1  \text{ for all } g \in W.
\end{equation}
 Recall that the group $G$ (and hence $W$) is $n$-nilpotent. Hence by classical commutator collection in groups,
 every $g\in W$ can be written in a normal form using iterated classical group commutators of length at most $n$ of
 generators $h_A$ of $W$. Such an iterated commutator of $i\leq n$ generators
 with $|A_1| = \dots = |A_i| = 2m$ is of the form 
\[ [h^{(1)}_{A_1},\dots,h^{(i)}_{A_i}] = w_C \text{ for some } A_1\cup\dots\cup A_i\subseteq C\subseteq [k], |C|= 2mi, w\in (\gamma_i G)^{[2]^{2mi}}. \]
 Fixing a linear order on the subsets $B$ of $[k]$, we can then write any $g\in W$ in the form 
\begin{equation} \label{eq:gnf}
 g = \prod_{B\subseteq [k], |B| = 2mi, i\in [n]} w^{(B)}_B
\end{equation}
 for some $w^{(B)}\in (\gamma_i G)^{[2]^{2mi}}$ if $|B| = 2mi$.

 The particular order we want to use in~\eqref{eq:gnf} is reverse-lexicographical on the characteristic functions of
 $B\subseteq [k]$ with $|B| \leq 2mn = k-1$.
 Hence first come all the subsets $B$ with $k\not\in B$, then all the subsets $B$ with $k\in B$ but $k-1 \not\in B$,
 \dots and the last subset is $\{2,\dots,k\}$.

 For $g\in W$ as in~\eqref{eq:gnf}, we then have
\[ \Delta g = \Delta\left( \prod_{B\subseteq [k], |B| = 2mi, i\in [n]} w^{(B)}_B \right) = \prod_{\{k\}\subseteq B\subseteq k, |B| = 2mi, i\in [n]}  w^{(B)}_B \]
 since all the factors $w^{(B)}_B$ with $k\not\in B$ get cancelled in
$$(\Delta g)(x_1,\dots,x_{k-1}) = g(x_1,\dots,x_{k-1},0)^{-1}g(x_1,\dots,x_{k-1},1).$$
 Another application of $\Delta$ cancels all the remaining factors with $k-1\not\in B$ and yields
\[ \Delta^2 g =  \prod_{\{k-1,k\}\subseteq B\subseteq [k], |B| = 2mi, i\in [n]}  w^{(B)}_B. \]
 After $j\leq k$ applications of $\Delta$ we have
\[ \Delta^j g =  \prod_{\{k-j+1,\dots,k\}\subseteq B\subseteq [k], |B| = 2mi, i\in [n]} w^{(B)}_B. \]
 But for $j=k$ the above product is empty, which shows~\eqref{eq:Dkg}.

 Finally let $g\in W$ satisfy the assumption of the implication~\eqref{eq:TCk}, i.e.,
\[ g(x0)=g(x1) \text{ for all } x\in [2]^{k-1}\setminus \{(1,\dots,1)\}. \]
Then
\[ (\Delta g)(x) = \begin{cases} g(1,\dots,1,0)^{-1} g(1,\dots,1,1) & \text{if } x=(1,\dots,1), \\ 1 & \text{else.} \end{cases} \]
 After $j\leq k$ applications of $\Delta$ we obtain a $(k-j)$-ary function $\Delta^j g$ mapping $(1,\dots,1)$ to  
 $g(1,\dots,1,0)^{-1} g(1,\dots,1,1)$ and every other $x\in [2]^{k-j}$ to $1$.
 Thus $\Delta^k g$ is the constant function with value $g(1,\dots,1,0)^{-1} g(1,\dots,1,1)$. Since $\Delta^k g$ is constant $1$
 by~\eqref{eq:Dkg}, we have
\[ g(1,\dots,1,0) = g(1,\dots,1,1). \]
Thus every $g\in W$ satisfies~\eqref{eq:TCk}.
This concludes the proof that $S$ is supernilpotent of class at most $k-1=2mn$ and of the lemma.
\end{proof}

\begin{proof}[Proof of Theorem~\ref{thm:csi}]
 Follows from Lemmas~\ref{lem:csi} and~\ref{lem:csib}.
\end{proof}

 Corollary~\ref{cor:er} follows readily from the next auxiliary result which is a straighforward generalization of 
 \cite[Proposition 3.3.3, (4)$\Rightarrow$(1)]{Ho:FST}.
 
 \begin{lem}   \label{lem:er}
 Let $S$ be an eventually regular semigroup with a primitive idempotent $e$.
 Then $e$ generates a completely simple ideal of $S$. 
\end{lem}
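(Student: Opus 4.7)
The approach I would take is to let $J := S^1 e S^1$ denote the principal ideal of $S$ generated by $e$ and show that $J$ is a completely simple semigroup. Since primitivity of an idempotent depends only on the idempotents weakly below it in the natural order, and all such idempotents already lie in $J$, $e$ remains primitive in $J$. By the definition of completely simple, it therefore suffices to show that $J$ is simple as a semigroup. The key intermediate step will be to first establish that the local submonoid $eSe$ is a group.

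To prove $eSe$ is a group, I would start with an arbitrary $a \in eSe$ and use eventual regularity of $S$ to obtain $n \geq 1$ and $b \in S$ with $a^n b a^n = a^n$. Replacing $b$ by $b' := ebe \in eSe$ preserves this identity and turns $a^n b'$ into an idempotent of $eSe$ lying weakly below $e$ in the natural order. Primitivity of $e$ then forces $a^n b' = e$, and symmetrically $b' a^n = e$, so $a^n$ is invertible in $eSe$ with inverse $b'$. Since $a \cdot (a^{n-1} b')$ and $(b' a^{n-1}) \cdot a$ are then a right and a left inverse of $a$ in the monoid $eSe$, they must coincide and make $a$ itself invertible. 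Hence $eSe$ is a group with identity $e$.

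With $eSe$ a group, simplicity of $J$ follows quickly. For any $x \in J$, the element $exe$ lies in $eSe$ and therefore has an inverse $c \in eSe \subseteq J$ satisfying $c \cdot x \cdot e = c \cdot (exe) = e$, so that $e \in J x J$. Any $y = s_1 e s_2 \in J$ with $s_1, s_2 \in S^1$ admits the decomposition $y = (s_1 e) \cdot e \cdot (e s_2)$ with $s_1 e, e s_2 \in J$, which gives $J = J e J \subseteq J (J x J) J \subseteq J^1 x J^1$; thus $J$ has no proper ideals and is simple. The main obstacle compared with Howie's argument in the fully regular case is precisely that a general element of $S$ need not be regular: the whole purpose of working with $a^n$ rather than $a$ in the first step is to import regularity via eventual regularity, and then bootstrap invertibility from $a^n$ back to $a$ using the primitivity of $e$.
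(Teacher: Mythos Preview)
Your argument is correct and genuinely different from the paper's. The paper works entirely in terms of Green's relations: it shows that $J_e$ is a minimal $\mathcal{J}$-class by taking any $a$ with $J_a \leq J_e$, passing via eventual regularity to an idempotent $f = a^n b$ with $J_f \leq J_a \leq J_e$, and then invoking Howie's argument from the regular case verbatim to conclude $J_f = J_e$. Your route instead isolates the local submonoid $eSe$, proves it is a group by first inverting $a^n$ and then bootstrapping to $a$, and derives simplicity of $J = S^1 e S^1$ from the single observation that $e \in JxJ$ for every $x \in J$. Your version is more self-contained, since it does not defer the key step to Howie's text, and it makes the role of primitivity very explicit (it is used exactly once, to force $a^n b' = e$). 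The paper's version has the advantage of tracking the $\mathcal{J}$-class structure directly, which makes the minimality of the resulting ideal immediate and fits the statement more closely into the standard framework of~\cite{Ho:FST}. Either way, the only real adaptation needed beyond the regular case is the passage from $a$ to a regular power $a^n$, and you handle that cleanly.
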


\begin{proof}
 We show that $J_e$ is a minimal $\mathcal{J}$-class.
 Let $a\in S$ (not necessarily idempotent) such that  $J_a \leq J_e$.
 Since $S$ is eventually regular, we have $n\geq 1$ and $b\in S$ such that $a^n b a^n = a^n$.
 So $f := a^n b$ is idempotent and $J_f \leq J_a \leq J_e$.
 As in the proof of \cite[Proposition 3.3.3, (4)$\Rightarrow$(1)]{Ho:FST}, it now follows that $J_f = J_e$
 and consequently $J_a = J_e$.
 Hence $J_e$ is a minimal $\mathcal{J}$-class and $e$ generates a simple ideal $K$ of $S$.
 Since $e$ is primitive, $K$ is completely simple.
\end{proof}

\section{Monoids} \label{sec:monoid}

 Let $S$ be a solvable, nilpotent or supernilpotent monoid. Then $1$ is the unique idempotent in $S$
 by Example~\ref{ex:sl}. It follows that any solvable (left nilpotent, right nilpotent or supernilpotent)
 eventually regular monoid is a solvable (nilpotent) group.

 Mal'cev~\cite{Ma:NS} and independently Neumann and Taylor~\cite{NT:SNG} characterized subsemigroups of
 nilpotent groups by equations in the signature $\cdot$ without the inverse $^{-1}$.
 The latter used the following semigroup words. For variables $x,y$ and $z := (z_1,z_2,\dots)$ a sequence of countably
 many variables, let
\[ q_1(x,y,z) := xy,\quad q_{n+1}(x,y,z) := q_n(x,y,z)\, z_n\, q_n(y,x,z) \]
 for $n\geq 1$. Note that $q_n(x,y,z)$ only contains $x,y,z_1,\dots,z_{n-1}$.

 By~\cite[Theorem 1]{NT:SNG} a semigroup $S$ embeds into an $n$-nilpotent group iff $S$ is cancellative and
\[ S\models q_n(x,y,z) \approx q_n(y,x,z). \] 
 For $n\geq 1$ and a semigroup $S$, we define the congruence $\rho_n$ on $S$ generated by all pairs $(q_n(x,y,z), q_n(y,x,z))$,
\[ \rho_n := \Cg( (q_n(x,y,z),q_n(y,x,z)) \ |\ x,y\in S, z\in S^\N). \]

\begin{lem} \label{lem:monoidright}
 Let $S$ be a monoid and $n\in\N$. Then
 \[ (q_n(x,y,z),q_n(y,x,z)) \in [1)^{n+1} \text{ for all } x,y\in S, z\in S^\N. \]
\end{lem}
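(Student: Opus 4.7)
The plan is to induct on $n$, supported by the following observation valid in any monoid $S$: if $\alpha$ is a congruence and $(p, p') \in \alpha$, then for every $z \in S$ one has $(p z p', p' z p) \in [\alpha, 1]$. To see this, I would exhibit the matrix
\[
\begin{bmatrix} pz & 1 \\ pz & 1 \end{bmatrix}
\begin{bmatrix} p & p \\ p' & p' \end{bmatrix}
\begin{bmatrix} 1 & zp \\ 1 & zp \end{bmatrix}
=
\begin{bmatrix} pzp & pzp \\ pzp' & p'zp \end{bmatrix}
\]
in $M_S(\alpha, 1)$: the outer factors are second-type generators (columns constant, with the $1$-type pairs $(pz, 1)$ and $(1, zp)$), and the middle factor is a first-type generator (rows constant, with the $\alpha$-pair $(p, p')$). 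Since the top row is constant, the term condition~\eqref{eq:tc} immediately yields $(p z p', p' z p) \in [\alpha, 1]$. The monoid identity is used crucially both to build the outer generators and to make the top row collapse to $(pzp, pzp)$.

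The lemma then follows by a short induction. For the base case $n = 1$, apply the observation with $\alpha = 1$, $(p, p') = (x, y)$, and $z = 1$ to get $(xy, yx) = (q_1(x,y,z), q_1(y,x,z)) \in [1, 1] = [1)^2$. For the inductive step, assume $(q_n(x, y, z), q_n(y, x, z)) \in [1)^{n+1}$, and apply the observation with $\alpha := [1)^{n+1}$, $p := q_n(x, y, z)$, $p' := q_n(y, x, z)$, and $z := z_n$. The resulting pair is precisely $(q_{n+1}(x, y, z), q_{n+1}(y, x, z))$, which therefore lies in $[[1)^{n+1}, 1] = [1)^{n+2}$, as required.

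The main obstacle is guessing the three-factor matrix --- specifically, seeing that the outer $1$-type generators should carry the ``wrapping data'' $pz$ and $zp$ so that the top row telescopes to $pzp$ via $pz \cdot p \cdot 1 = p \cdot 1 \cdot zp$. Once that pattern is in hand, both the matrix verification and the inductive bookkeeping reduce to componentwise calculations.
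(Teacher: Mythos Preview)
Your proof is correct and follows essentially the same approach as the paper. The paper carries out exactly your three-factor matrix computation in both the base case and the inductive step (its inductive matrix is literally your observation with $p=q_n(x,y,z)$, $p'=q_n(y,x,z)$, $z=z_n$); you have simply abstracted the repeated construction into a clean standalone observation and applied it twice, which is a modest organizational improvement.
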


\begin{proof}
 We use induction on $n$. For $n=1$ we have to show that $(xy,yx)\in [1,1]$ for all $x,y\in S$.
 Consider
\[ \begin{bmatrix} x & 1 \\ x & 1 \end{bmatrix} \begin{bmatrix} 1 & 1 \\ y & y \end{bmatrix} \begin{bmatrix} 1 & x \\ 1 & x \end{bmatrix} = \begin{bmatrix} x & x \\ xy & yx \end{bmatrix} \in M(1,1). \]   
 Since the entries in the first row of the last matrix are equal, the entries in the second row $(xy,yx)$ are in
 $[1,1]$ by~\eqref{eq:tc}.

 Next let $x,y\in S, z\in S^\N$ and assume $(q_n(x,y,z),q_n(y,x,z)) \in [1)^{n+1}$ for fixed $n$. In $M([1)^{n+1},1)$ consider
\begin{align*} & \begin{bmatrix} q_n(x,y,z)z_n & 1 \\ q_n(x,y,z)z_n & 1 \end{bmatrix} \begin{bmatrix} q_n(x,y,z) & q_n(x,y,z) \\ q_n(y,x,z) & q_n(y,x,z) \end{bmatrix} \begin{bmatrix} 1 & z_n q_n(x,y,z) \\ 1 & z_n q_n(x,y,z) \end{bmatrix} \\
  = & \begin{bmatrix} q_n(x,y,z)z_nq_n(x,y,z) & q_n(x,y,z)z_n q_n(x,y,z) \\ q_n(x,y,z)z_nq_n(y,x,z) & q_n(y,x,z)z_n q_n(x,y,z) \end{bmatrix}.
\end{align*}
 Since the entries in the first row of the last matrix are equal and the entries in the second row are
 $(q_{n+1}(x,y,z),q_{n+1}(y,x,z))$, the latter is in $[1)^{n+2}$ by~\eqref{eq:tc}. The induction is complete.
\end{proof}

\begin{cor} \label{cor:rnqn}
 If $S$ is a right $n$-nilpotent monoid, then
\[ S \models q_n(x,y,z) \approx q_n(y,x,z). \]
\end{cor}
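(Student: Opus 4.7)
The corollary is essentially immediate from the preceding lemma once one unpacks the definition of right $n$-nilpotence. My plan is as follows.

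First, I would recall that by definition $S$ is right $n$-nilpotent precisely when the $(n+1)$-fold right-associated commutator $[1)^{n+1}$ equals the trivial congruence $0$; in other words, any pair of elements that lies in $[1)^{n+1}$ must actually be equal.

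Next, I would apply Lemma~\ref{lem:monoidright} directly: for arbitrary $x,y \in S$ and $z \in S^{\N}$, the pair $(q_n(x,y,z), q_n(y,x,z))$ belongs to $[1)^{n+1}$. Combining with the previous paragraph, this pair lies in $0$, so $q_n(x,y,z) = q_n(y,x,z)$. Since $x,y,z$ were arbitrary, the identity $q_n(x,y,z) \approx q_n(y,x,z)$ holds in $S$.

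There is no real obstacle here; all the work has already been done in Lemma~\ref{lem:monoidright}. The only mild subtlety worth a remark is that $q_n$ is a genuine semigroup word (no inverses or identity needed for its formation), so the identity makes sense and is evaluated in the monoid $S$ without any translation, making the deduction from $[1)^{n+1}=0$ to the word identity a one-line consequence.
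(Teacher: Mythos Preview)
Your proposal is correct and matches the paper's approach exactly: the paper states the corollary without proof, treating it as an immediate consequence of Lemma~\ref{lem:monoidright} together with the definition $[1)^{n+1}=0$, which is precisely what you do.
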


 The converse of the corollary is not true since there are commutative monoids (e.g. non-trivial semilattices)
 that are not abelian (i.e. right $1$-nilpotent).

\begin{lem} \label{lem:monoidleft}
 Left nilpotent monoids and supernilpotent monoids are cancellative.
\end{lem}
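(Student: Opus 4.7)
The plan is to exploit the identity element $1$ of the monoid to build matrices in $M_S(1,\ldots,1)$ whose rows above the last collapse to equal entries, so that the term condition forces cancellation at the last row.

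\emph{Left nilpotent case.} Suppose $S$ is left $n$-nilpotent and $ax=ay$. Consider
\[ \begin{bmatrix} a & a \\ 1 & 1 \end{bmatrix}\begin{bmatrix} x & y \\ x & y \end{bmatrix}=\begin{bmatrix} ax & ay \\ x & y \end{bmatrix}\in M_S(1,1). \]
Its top row is equal by hypothesis, so~\eqref{eq:tc} gives $(x,y)\in[1,1]=(1]^2$. The key step is then an induction on $k$: if $(x,y)\in(1]^k$ for some $k\geq 1$, then the \emph{same} product is also an element of $M_S(1,(1]^k)$ (the right factor is now a second-argument generator of this smaller sub-algebra), and its top row being equal yields $(x,y)\in[1,(1]^k]=(1]^{k+1}$. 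After $n$ iterations $(x,y)\in(1]^{n+1}=0$, so $x=y$. Right cancellation follows from the symmetric product
\[ \begin{bmatrix} x & y \\ x & y \end{bmatrix}\begin{bmatrix} a & a \\ 1 & 1 \end{bmatrix}=\begin{bmatrix} xa & ya \\ x & y \end{bmatrix} \]
together with the same induction.

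\emph{Supernilpotent case.} Suppose $S$ is $d$-supernilpotent and $ax=ay$. I would assemble a single element of $M_S(\underbrace{1,\ldots,1}_{d+1})$ by taking, in the generators from the definition, $(a_i,b_i)=(a,1)$ for $i\in[d]$ and $(a_{d+1},b_{d+1})=(x,y)$, and multiplying the first $d$ factors on the left of the last one. At the row indexed by $r=(r_1,\ldots,r_d)\in\{0,1\}^d$ the entries become $(a^{k_r}x,a^{k_r}y)$, where $k_r:=|\{i:r_i=0\}|$ is the number of factors that contribute $a$ rather than $1$. The bottom row corresponds to $r=(1,\ldots,1)$, so $k_r=0$ and its entries are $(x,y)$; every other row has $k_r\geq 1$, and $ax=ay$ immediately gives $a^{k_r}x=a^{k_r}y$ by induction on $k_r$. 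Hence~\eqref{eq:htc} together with $[1,\ldots,1]=0$ forces $x=y$. Right cancellation is handled symmetrically by placing the $(d+1)$-th generator on the left of the others, which produces entries $(xa^{k_r},ya^{k_r})$ at row $r$ and reduces exactly as before.

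I do not foresee any real obstacle: the argument uses only the existence of $1$ (to build the generator $\begin{bmatrix} a & a \\ 1 & 1 \end{bmatrix}$) and the observation that every non-bottom row carries at least one factor of $a$. The most delicate point is the bookkeeping in the supernilpotent case, where one needs to verify that the chosen order of multiplication places the $a$'s correctly with respect to $x$ and $y$, but no deeper commutator identity is required.
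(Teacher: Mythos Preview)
Your proof is correct and follows essentially the same approach as the paper: the same $2\times 2$ matrix $\begin{bmatrix} a & a \\ 1 & 1 \end{bmatrix}\begin{bmatrix} x & y \\ x & y \end{bmatrix}$ with the same bootstrapping induction through $M_S(1,(1]^k)$ for the left nilpotent case, and the same product of generators $(a,1),\ldots,(a,1),(x,y)$ in $M_S(1,\ldots,1)$ for the supernilpotent case. You are in fact slightly more thorough than the paper, which only spells out left cancellation and leaves the right-cancellative half implicit; your symmetric matrices handle that correctly.
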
 

\begin{proof}  
 Let $S$ be a monoid and let $a,c,d\in S$ such that $ac = ad$. From
\[ \begin{bmatrix} a & a \\ 1 & 1 \end{bmatrix} \begin{bmatrix} c & d \\ c & d \end{bmatrix} = \begin{bmatrix} ac & ad \\ c & d \end{bmatrix} \in M(1,1) \] 
we see $(c,d) \in [1,1]$ by~\eqref{eq:tc}. Hence
\[ \begin{bmatrix} ac & ad \\ c & d \end{bmatrix} \in M(1,[1,1]), \] 
 which yields $(c,d)\in [1,[1,1]]$. Iterating this argument yields $(c,d)\in (1]^{n+1}$ for all $n\in\N$.
 If $S$ is left $n$-nilpotent, then $c=d$, i.e. $S$ is cancellative.

 We can also extend the previous argument to higher commutators in a straightforward way. To simplify notation we just
 show the ternary case.
\[ \begin{bmatrix} a & a \\a & a \\ 1 & 1 \\ 1 & 1 \end{bmatrix}\begin{bmatrix} a & a \\ 1 & 1 \\ a & a \\ 1 & 1 \end{bmatrix} \begin{bmatrix} c & d \\ c & d \\ c & d \\ c & d \end{bmatrix} = \begin{bmatrix} a^2c & a^2d \\  ac & ad \\ ac & ad \\c & d \end{bmatrix} \in M(1,1,1) \] 
 yields $(c,d)\in [1,1,1]$ by~\eqref{eq:htc}. Similarly $(c,d)\in [1,\dots,1]$ for every $n$-ary higher commutator with $n\geq 2$.
 Hence, if $S$ is supernilpotent, then it is cancellative.
\end{proof}

Recall that a congruence $\sigma$ of a semigroup is \emph{cancellative} if $S/\sigma$ is cancellative.
Right nilpotent monoids are not necessarily cancellative (in particular not left nilpotent) as the following example
shows.

\begin{exa}
  Let $S$ be the monoid with presentation
\[ S := \langle x,y \st xy=yx=y^2=x^2 \rangle. \]
 Its elements have normal forms $1,y,x,x^2,x^3,\dots$.
 Hence $S$ is commutative but not cancellative. 
 Its smallest cancellative congruence $\gamma$ has a single non-trivial class $\{ x,y\}$.
 We show that $S$ is right $2$-nilpotent but not left nilpotent.

 {\bf Claim $(1]^{n+1} = \gamma$ for all $n\geq 1$:} First show that $(x,y)\in (1]^{n+1}$ for all $n\geq 0$ by induction.
 The base case for $n=0$ holds since $(1]^1 = 1$.
 Assume $n\geq 1$ in the following. By the induction hypothesis we have $(x,y)\in (1]^{n}$ and
\[ \begin{bmatrix} x & x \\ 1 & 1 \end{bmatrix} \begin{bmatrix} x & y \\ x & y \end{bmatrix} = \begin{bmatrix} x^2 & x^2 \\ x & y \end{bmatrix} \in M(1,(1]^n). \] 
 Then $(x,y) \in [1,(1]^n] = (1]^{n+1}$ by~\eqref{eq:tc} and the induction is proved.

 We showed $(1]^{n+1} \supseteq \gamma$ for all $n\geq 0$.
 For $n\geq 1$ the converse inclusion follows since $S/\gamma \cong (\N,+)$ is abelian.
 
 {\bf Claim $[[1,1],1] = 0$:} By the previous claim $M([1,1],1)$ is generated by
\[ A := \left\{ \begin{bmatrix} x & x \\ y & y \end{bmatrix}, \begin{bmatrix} y & y \\ x & x \end{bmatrix} \right\} \cup \left\{ \begin{bmatrix} c & d \\ c & d \end{bmatrix} \st c,d\in S\right\}. \]
By the presentation of $S$
\[ M([1,1],1) \setminus A = \left\{ \begin{bmatrix} xc & xd \\ yc & yd \end{bmatrix}, \begin{bmatrix} yc & yd \\ xc & xd \end{bmatrix} \st c,d\in S, c \neq d \right\}. \]
 For every element in $A$, equality in the first row clearly implies equality in the second row as well.
 For $\begin{bmatrix} xc & xd \\ yc & yd \end{bmatrix}$ where $c\neq d$, equality in the first row implies that
 $\{c,d\} = \{x,y\}$ and $xc=xd = x^2$. But then $yc=x^2=xd$ and we have equality in the second row as well.
 The case of $\begin{bmatrix} yc & yd \\ xc & xd \end{bmatrix}$ is clearly symmetric.
 Hence  $[[1,1],1] = 0$ as claimed.

 {\bf Claim $\underbrace{[1,\dots,1]}_{n+1} = \gamma$ for all $n\geq 1$:} This follows from a straightforward generalization
 of the case for $n=1$.
\end{exa}

\begin{proof}[Proof of Theorem~\ref{thm:nmonoid}]
 \eqref{it:crn}$\Rightarrow$\eqref{it:eng} follows from Corollary~\ref{cor:rnqn} and~\cite[Theorem 1]{NT:SNG}. 

 \eqref{it:eng}$\Rightarrow$\eqref{it:rln} is immediate.

 \eqref{it:rln}$\Rightarrow$\eqref{it:crn} follows from Lemma~\ref{lem:monoidleft}.
\end{proof}

\begin{cor} \label{cor:1n}
  Let $S$ be a monoid. Then the cancellative congruence generated by $[1)^{n+1}$
  (or simply by $\{(q_n(x,y,z),q_n(y,x,z)) \st x,y\in S, z\in S^\N \}$)
  is the smallest congruence $\delta$ of $S$ such that $S/\delta$ embeds into an $n$-nilpotent group. 
\end{cor}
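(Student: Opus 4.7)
The plan is to identify the smallest congruence $\delta^*$ with the stated embedding property, and then show that both descriptions coincide with it. Write $Q := \{(q_n(x,y,z), q_n(y,x,z)) \st x, y \in S, z \in S^\N\}$, and let $\delta_0$ and $\delta_1$ denote the cancellative congruences of $S$ generated by $Q$ and by $[1)^{n+1}$, respectively. (Existence of these cancellative congruences is standard: subalgebras of products of cancellative monoids are cancellative, so the intersection of cancellative congruences is cancellative.)

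I would first dispatch $\delta^* = \delta_0$ using the Neumann--Taylor theorem~\cite{NT:SNG}, already invoked in the proof of Theorem~\ref{thm:nmonoid}: a semigroup embeds into an $n$-nilpotent group iff it is cancellative and satisfies $q_n(x,y,z) \approx q_n(y,x,z)$. Applied to $S/\delta$, this gives: $S/\delta$ embeds iff $\delta$ is a cancellative congruence containing $Q$. Hence the smallest such $\delta$ is exactly $\delta_0$.

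Next I would show $\delta_0 = \delta_1$. The inclusion $\delta_0 \leq \delta_1$ follows from Lemma~\ref{lem:monoidright}, which yields $Q \subseteq [1)^{n+1} \subseteq \delta_1$, so $\delta_1$ is a cancellative congruence containing $Q$. For the reverse inclusion, I would invoke the standard commutator-quotient inequality: for the canonical projection $\pi \colon S \to S/\delta_0$ and any congruences $\alpha, \beta$ of $S$,
\[ \pi([\alpha, \beta]_S) \leq [\pi(\alpha), \pi(\beta)]_{S/\delta_0}. \]
I would verify this by setting $\gamma := \pi^{-1}([\pi(\alpha), \pi(\beta)]_{S/\delta_0})$ and showing that $\gamma$ satisfies the term condition~\eqref{eq:tc} for $M_S(\alpha, \beta)$: any $M \in M_S(\alpha, \beta)$ projects to $M/\delta_0 \in M_{S/\delta_0}(\pi(\alpha), \pi(\beta))$ (generators go to generators and $\pi$ preserves products), and the term condition for $[\pi(\alpha), \pi(\beta)]_{S/\delta_0}$ in $S/\delta_0$ pulls back through $\pi^{-1}$. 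Iterating with $\alpha = \beta = 1$ gives $\pi([1)^{n+1}_S) \leq [1)^{n+1}_{S/\delta_0}$. Since $S/\delta_0$ embeds into an $n$-nilpotent group, it is right $n$-nilpotent by Theorem~\ref{thm:nmonoid}, so $[1)^{n+1}_{S/\delta_0} = 0$, giving $[1)^{n+1}_S \leq \delta_0$. As $\delta_0$ is cancellative, minimality of $\delta_1$ forces $\delta_1 \leq \delta_0$.

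The main technical step I anticipate is the commutator-quotient inequality; although it is the easy direction of a well-known universal-algebra fact and holds in any algebra, it is not made explicit in earlier sections of the paper and requires the careful lifting-and-projection argument on $M$-matrices sketched above. Once this is in place, the rest assembles immediately from Lemma~\ref{lem:monoidright}, Theorem~\ref{thm:nmonoid}, and Neumann--Taylor.
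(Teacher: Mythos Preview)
Your proof is correct and is precisely the argument the paper leaves implicit (the corollary is stated without proof, as an immediate consequence of Theorem~\ref{thm:nmonoid}, Lemma~\ref{lem:monoidright}, and Neumann--Taylor). The commutator-quotient inequality you single out as the main technical step is indeed standard and, in fact, amounts to the assertion already made in the introduction that $\A/[1)^{d+1}$ is the largest right $d$-nilpotent quotient of~$\A$; granting that, $[1)^{n+1}_S \leq \delta_0$ follows immediately once $S/\delta_0$ is known to be right $n$-nilpotent.
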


 Again for $n=1$ this yields:
 for a monoid $S$, the commutator $[1,1]$ is the smallest congruence $\delta$ such that $S/\delta$ embeds into an
 abelian group. Equivalently, $[1,1]$ is the cancellative congruence of $S$ that is generated by
 $\{(xy,yx) \st x,y\in S\}$.

 Each of the equivalent conditions in Theorem~\ref{thm:nmonoid} implies $n$-supernilpotence.
 For $n\leq 2$ we can also prove the converse. The general case remains open.

\begin{thm} \label{thm:2sn}
 For $n\in\{0,1,2\}$ and a monoid $S$ the following are equivalent:
\begin{enumerate}
\item \label{it:sn2}
 $S$ is $n$-supernilpotent.
\item \label{it:rln2}
 $S$ is left and right $n$-nilpotent.
\item \label{it:eng2}
 $S$ embeds into an $n$-nilpotent group.  
\end{enumerate}
\end{thm}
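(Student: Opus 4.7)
For $n = 0$, each of (1), (2), (3) asserts $|S| = 1$, which is trivially equivalent. For $n = 1$, each of left-, right-, and super-$1$-nilpotence coincides with abelianness (as noted in the introduction), and Theorem~\ref{thm:nmonoid} supplies the equivalence with embedding into an abelian ($=1$-nilpotent) group.

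For $n = 2$, I would observe that $(2) \Leftrightarrow (3)$ is exactly Theorem~\ref{thm:nmonoid}, and establish $(2) \Rightarrow (1)$ by a subsemigroup-inheritance argument valid for every $n$: if $S$ embeds into an $n$-nilpotent group $G$, then $G$ is $n$-supernilpotent as a group and hence as a semigroup by Lemma~\ref{lem:sgreduct}; since every generator of $M_S(1_S, \dots, 1_S)$ is also a generator of $M_G(1_G, \dots, 1_G)$ via the inclusion $S \hookrightarrow G$, we obtain $M_S(1_S, \dots, 1_S) \subseteq M_G(1_G, \dots, 1_G)$, and the term condition~\eqref{eq:htc} for $\delta = 0$ transfers from $G$ to $S$.

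The substantive implication is $(1) \Rightarrow (3)$ for $n = 2$. I would proceed in three steps. First, Lemma~\ref{lem:monoidleft} gives that a $2$-supernilpotent monoid $S$ is cancellative. Second, by the Neumann--Taylor embedding criterion (\cite[Theorem~1]{NT:SNG}, as used in the proof of Theorem~\ref{thm:nmonoid}), embedding of $S$ into a $2$-nilpotent group will follow once I verify the identity
\[
xy \cdot z_1 \cdot yx \;=\; yx \cdot z_1 \cdot xy \qquad (x, y, z_1 \in S).
\]
The plan for this second step is to exhibit an explicit element $M \in M_S(1, 1, 1)$ whose first three rows (those indexed by $(r_1, r_2) \in \{(1,1), (1,2), (2,1)\}$) have equal left and right entries and whose fourth row (indexed by $(r_1, r_2) = (2,2)$) is the pair $(xy \cdot z_1 \cdot yx,\ yx \cdot z_1 \cdot xy)$; the term condition~\eqref{eq:htc} for $[1,1,1]_S = 0$ then collapses this fourth pair and yields the identity. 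The third step is simply to invoke Neumann--Taylor.

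The main obstacle is the explicit construction of $M$ in Step~2. My plan is to lift the binary witness in the proof of Lemma~\ref{lem:monoidright} (which produced $(xy, yx) \in [1, 1]$ via a specific $2 \times 2$ matrix in $M_S(1, 1)$) one level higher in the commutator hierarchy: insert a type-$3$ generator carrying $z_1$ between the two ``halves'' of the binary witness, and interleave type-$1$ and type-$2$ generators (with parameters drawn from $\{x, y, 1\}$) so that the cancellation afforded by Step~1 collapses the first three rows of the resulting $4 \times 2$ matrix while the fourth row retains the desired pair $(xy z_1 yx,\ yx z_1 xy)$. Choosing the correct sequence, parameters, and interleaving of generators so that cancellativity does exactly the right work at rows $(1,1),(1,2),(2,1)$ but not at $(2,2)$ is the delicate combinatorial core of the argument, and I expect this to be where the restriction $n \le 2$ really bites; the analogous identity for $n \ge 3$ is the content of the open direction of Question~\ref{que:sn}.
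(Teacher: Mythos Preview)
Your overall strategy matches the paper's: reduce to $n=2$, get cancellativity from Lemma~\ref{lem:monoidleft}, then verify the Neumann--Taylor identity $q_2(x,y,z)=q_2(y,x,z)$ via explicit elements of $M_S(1,1,1)$. The gap is in your Step~2. The plan to build a single matrix $M$ whose first three rows are ``collapsed by cancellativity'' does not work as stated: the term condition~\eqref{eq:htc} requires the first three rows to be \emph{literally equal} in $S$, and cancellativity is an implication (from $au=av$ to $u=v$), not a device for making given matrix entries coincide.

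What the paper actually does is a two-step bootstrap. First, a carefully chosen product in $M_S(1,1,1)$ has its top three rows equal on the nose and bottom row $(xy^2x^2,\,yx^2yx)$; the term condition gives $xy^2x^2=yx^2yx$, and \emph{then} cancellativity (cancelling the rightmost $x$) yields the auxiliary identity $xy^2x=yx^2y$. Second, the ``lifted binary witness with $z$ inserted'' that you describe is essentially the paper's next matrix --- but its second row comes out as $(xy^2x,\,yx^2y)$, which is \emph{not} equal by construction. It becomes equal only because of the auxiliary identity just proved. With all three top rows now equal, the term condition delivers $xyzyx=yxzxy$. So cancellativity enters not by collapsing rows of the main witness, but by converting the output of a \emph{first} application of~\eqref{eq:htc} into the row equality needed for a \emph{second} application. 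That bootstrap is the idea your plan is missing.
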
  

\begin{proof}
 Clearly $S$ is $0$-supernilpotent iff $S$ is trivial iff $S$ is left and right $0$-nilpotent. Further $S$ is $1$-supernilpotent iff
 it is abelian iff it is left and right $1$-nilpotent. In either case the result follows from Theorem~\ref{thm:nmonoid}.

 Hence we assume $n=2$ in the following. The implication \eqref{it:eng2}$\Rightarrow$\eqref{it:sn2} is immediate.
 For  \eqref{it:sn2}$\Rightarrow$\eqref{it:eng2}, assume $S$ is $2$-supernilpotent.
 Then $S$ is cancellative by Lemma~\ref{lem:monoidleft}. By~\cite[Theorem 1]{NT:SNG} it suffices to show that
 $S \models q_2(x,y,z) \approx q_2(y,x,z)$. Let $x,y\in S$. Then
 \[ \begin{bmatrix} xy & 1 \\ xy & 1 \\ xy & 1 \\ xy & 1 \end{bmatrix}
   \begin{bmatrix} x & x \\ x & x \\ 1 & 1 \\ 1 & 1 \end{bmatrix}
   \begin{bmatrix} 1 & 1 \\ y & y \\ 1 & 1 \\ y & y \end{bmatrix} 
   \begin{bmatrix} 1 & 1 \\ 1 & 1 \\ x & x \\ x & x \end{bmatrix}
   \begin{bmatrix} 1 & 1 \\ x & x \\ 1 & 1 \\ x & x \end{bmatrix}
   \begin{bmatrix} 1 & yx \\ 1 & yx \\ 1 & yx \\ 1 & yx \end{bmatrix}
 = \begin{bmatrix} xyx & xyx \\ xyxyx & xyxyx \\ xyx & xyx \\xy^2x^2 & yx^2yx \end{bmatrix} \in M(1,1,1) \]  
 and equality in the top three rows of the final matrix yields $xy^2x^2= yx^2yx$ by~\eqref{eq:htc}.
 Since $S$ is cancellative, we obtain $xy^2x=yx^2y$. 

 Next consider
 \[ \begin{bmatrix} x & 1 \\ x & 1 \\ x & 1 \\ x & 1 \end{bmatrix}
   \begin{bmatrix} 1 & 1 \\ y & y \\ 1 & 1 \\ y & y \end{bmatrix}
   \begin{bmatrix} 1 & x \\ 1 & x \\ 1 & x \\ 1 & x \end{bmatrix}
   \begin{bmatrix} 1 & 1 \\ 1 & 1 \\ z & z \\ z & z \end{bmatrix}
   \begin{bmatrix} 1 & x \\ 1 & x \\ 1 & x \\ 1 & x \end{bmatrix}
   \begin{bmatrix} 1 & 1 \\ y & y \\ 1 & 1 \\ y & y \end{bmatrix}
   \begin{bmatrix} x & 1 \\ x & 1 \\ x & 1 \\ x & 1 \end{bmatrix}
 = \begin{bmatrix} x^2 & x^2 \\ xy^2x & yx^2y \\ xzx & xzx \\ xyzyx & yxzxy \end{bmatrix} \in M(1,1,1). \]  
 Since $xy^2x=yx^2y$ and we have equality in the top three rows of the final matrix, we obtain $xyzyx = yxzxy$,
 that is $q_2(x,y,z) = q_2(y,x,z)$ by~\eqref{eq:htc}. This shows~\eqref{it:eng2} and completes the proof of the theorem.
\end{proof}
  
\begin{cor}
 Let $S$ be a monoid. Then
\begin{enumerate}
\item
 $[1,1,1]$ is the smallest congruence $\delta$ of $S$ such that $S/\delta$ embeds into a $2$-nilpotent group. 
\item 
 $[1,1,1]$ is the cancellative congruence of $S$ generated by $[[1,1],1]\vee [1,[1,1]]$, by $[[1,1],1]$ or
 simply by $\rho_2$.
\end{enumerate}
\end{cor}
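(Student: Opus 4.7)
The plan is to derive both parts from Theorem~\ref{thm:2sn} and Corollary~\ref{cor:1n}, together with the standard compatibility of the higher commutator under quotient homomorphisms: for congruences $\alpha_1,\dots,\alpha_n\geq\delta$ of $S$, one has $[\alpha_1,\dots,\alpha_n]_{S/\delta}=0$ iff $[\alpha_1,\dots,\alpha_n]_S\le\delta$.

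For part (1), I would note that $S/[1,1,1]$ is by construction the largest $2$-supernilpotent quotient of $S$, hence by Theorem~\ref{thm:2sn} embeds into a $2$-nilpotent group. Conversely, if $S/\delta$ embeds into a $2$-nilpotent group then $S/\delta$ is $2$-supernilpotent (being a subalgebra of one), so $[1,1,1]_{S/\delta}=0$, which by the compatibility above yields $[1,1,1]_S\le\delta$. Thus $[1,1,1]$ is the smallest such $\delta$.

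For part (2), the key preliminary step is to verify that $[[1,1],1]\le[1,1,1]$ and $[1,[1,1]]\le[1,1,1]$. This follows from applying Theorem~\ref{thm:2sn} to the $2$-supernilpotent quotient $S/[1,1,1]$: it is also left and right $2$-nilpotent, and pulling back via the commutator-quotient compatibility yields both inclusions. In addition, $[1,1,1]$ is itself cancellative, since $S/[1,1,1]$ embeds into a $2$-nilpotent group by part (1). Corollary~\ref{cor:1n} for $n=2$ now says that the cancellative congruence generated by $[1)^3=[[1,1],1]$, or equivalently by $\rho_2$, is the smallest $\delta$ such that $S/\delta$ embeds into a $2$-nilpotent group, which by part (1) is $[1,1,1]$. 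For the remaining case, the set $[[1,1],1]\vee[1,[1,1]]$ lies between $[[1,1],1]$ and $[1,1,1]$, so by monotonicity the cancellative congruence it generates is sandwiched between $[1,1,1]$ (its value at $[[1,1],1]$) and $[1,1,1]$ itself (which is already cancellative and contains the set), hence equals $[1,1,1]$.

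I anticipate no real obstacle; the argument is essentially bookkeeping once Theorem~\ref{thm:2sn} and Corollary~\ref{cor:1n} are in hand.
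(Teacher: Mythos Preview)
Your argument is correct and follows essentially the same route as the paper's proof: both use that $[1,1,1]$ is by definition the least $\delta$ with $S/\delta$ $2$-supernilpotent, then invoke Theorem~\ref{thm:2sn} to translate this into the embedding and left/right $2$-nilpotence statements, from which part~(2) follows. Your write-up is in fact more explicit than the paper's (you spell out the sandwich argument and cite Corollary~\ref{cor:1n} for the $\rho_2$ case), but the underlying strategy is identical.
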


\begin{proof}
 From its definition $[1,1,1]$ is the smallest congruence $\delta$ of $S$ such that $S/\delta$ is $2$-supernilpotent.
  
 Since $\bar{S} := S/[1,1,1]$ is $2$-supernilpotent, it embeds into a $2$-nilpotent group by Theorem~\ref{thm:2sn}.
 Equivalently $\bar{S}$ is left and right $2$-nilpotent. Thus $\bar{S}$ is the greatest cancellative quotient of $S/(1]^3$
 and of $S/[1)^3$.
\end{proof}  

 Finally we collect some results on commutators on free monoids.
 As it turns out free monoids are left $2$-nilpotent but in general not right nilpotent.
 
\begin{thm} \label{thm:free}
 Let $S$ be the free monoid over $\Sigma$. Then 
\begin{enumerate}
\item \label{it:free11}
 $[1,1] = \rho_1$.
\item \label{it:freeleft}
 $[1, [1,1]] = 0$.
\item \label{it:freeright}
 If $|\Sigma|\leq 2$, then $\rho_2 = [[1,1],1] = [1,1,1]$ is cancellative; else $\rho_2$ is not cancellative.
\item \label{it:freen}
 $S$ is right nilpotent iff $|\Sigma|\leq1$.
\end{enumerate}
\end{thm}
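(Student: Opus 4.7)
Parts \eqref{it:free11} and \eqref{it:freeleft} are quick applications of the framework developed earlier in the paper. For \eqref{it:free11}, Corollary~\ref{cor:1n} with $n=1$ identifies $[1,1]$ with the cancellative closure of $\rho_1$; since $\Sigma^*/\rho_1$ is the free commutative monoid on $\Sigma$, which embeds into the free abelian group $\mathbb{Z}^{(\Sigma)}$, the congruence $\rho_1$ is already cancellative, so $[1,1] = \rho_1$. For \eqref{it:freeleft}, I verify $C(1, \rho_1, 0)$ directly from~\eqref{eq:tc}. A generic element of $M_{\Sigma^*}(1, \rho_1)$ has entries $t(\bar a, \bar c), t(\bar a, \bar d), t(\bar b, \bar c), t(\bar b, \bar d)$ with $t$ a semigroup term in two groups of variables and $(c_i, d_i) \in \rho_1$ componentwise. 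Writing $t(\bar x, \bar y) = u_0\, y_{i_1}\, u_1 \cdots y_{i_r}\, u_r$ with each $u_\ell$ a word in the $\bar x$-variables, the fact that $c_i$ and $d_i$ share a Parikh vector (and hence a common length) forces the $u_\ell(\bar a)$ blocks to occupy identical positions in $t(\bar a, \bar c)$ and $t(\bar a, \bar d)$. If these two words are equal in $\Sigma^*$, positional comparison yields $c_{i_j} = d_{i_j}$ for each occurrence $j$, and therefore $t(\bar b, \bar c) = t(\bar b, \bar d)$, so $[1, \rho_1] = 0$.

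For \eqref{it:freeright}, the cases $|\Sigma| \leq 1$ are immediate, so assume $\Sigma = \{a, b\}$. Define a monoid homomorphism $\phi \colon \{a, b\}^* \to H$ into the discrete Heisenberg group $H := \mathbb{Z}^3$ with multiplication $(m, n, k)(m', n', k') = (m+m', n+n', k + k' + m n')$, sending $a \mapsto (1,0,0)$ and $b \mapsto (0,1,0)$; explicitly $\phi(u) = (|u|_a, |u|_b, N_{ab}(u))$ where $N_{ab}(u)$ counts pairs $i < j$ with $u_i = a$, $u_j = b$. Since $H$ is $2$-nilpotent it satisfies $q_2(x,y,z) = q_2(y,x,z)$, so $\phi$ factors through $\rho_2$. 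The key technical step is a combinatorial normal-form lemma: any two words in $\{a,b\}^*$ with identical $\phi$-value are $\rho_2$-equivalent, proved by using the atomic moves $xyzyx \leftrightarrow yxzxy$ (ranging over $x, y, z \in \{a,b\}^*$) to push any representative into a canonical arrangement determined solely by $(|u|_a, |u|_b, N_{ab}(u))$. This yields an embedding $\{a,b\}^*/\rho_2 \hookrightarrow H$, so $\rho_2$ is cancellative; hence $\rho_2 = [1,1,1]$ by Corollary~\ref{cor:1n}, and combined with $\rho_2 \leq [[1,1],1] \leq [1,1,1]$ (from Lemma~\ref{lem:monoidright} and higher-commutator monotonicity) all three congruences coincide. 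For $|\Sigma| \geq 3$, the plan is to exhibit $u, v \in \Sigma^*$ having the same image in the free $2$-nilpotent group on $\Sigma$ but $u \not\rho_2 v$; since by Corollary~\ref{cor:1n} that kernel is precisely the cancellative closure $[1,1,1]$ of $\rho_2$, such a pair witnesses $\rho_2 \subsetneq [1,1,1]$, hence non-cancellativity of $\rho_2$.

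For \eqref{it:freen}, the implication $|\Sigma| \leq 1 \Rightarrow \Sigma^*$ right nilpotent is immediate because $\Sigma^*$ is then commutative and thus abelian by Theorem~\ref{thm:nmonoid}. Conversely, if $|\Sigma| \geq 2$ and $\Sigma^*$ were right $n$-nilpotent for some $n$, Corollary~\ref{cor:rnqn} would force $\Sigma^* \models q_n(x,y,z) \approx q_n(y,x,z)$. An easy induction on $n$ shows that in the free monoid the word $q_n(x,y,z)$ begins with the first letter of $x$, so substituting distinct generators $x = a, y = b$ produces distinct words $q_n(a,b,z)$ and $q_n(b,a,z)$ in $\Sigma^*$, a contradiction. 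The main obstacles in this plan are the normal-form lemma for $|\Sigma| = 2$ in \eqref{it:freeright}, which requires careful book-keeping of how the $q_2$-moves permute the $a$'s and $b$'s within a given $\phi$-class, and the explicit construction of the counterexample for $|\Sigma| \geq 3$, where many natural candidate pairs turn out to be $\rho_2$-equivalent through non-obvious chains of swaps.
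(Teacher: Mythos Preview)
Your arguments for \eqref{it:free11}, \eqref{it:freeleft} and \eqref{it:freen} are correct and essentially match the paper's (your first-letter argument for \eqref{it:freen} is in fact more self-contained than the paper's appeal to \eqref{it:freeright}).

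The real divergence is in \eqref{it:freeright}. For $|\Sigma|=2$ the paper does not develop a normal-form lemma from scratch: it quotes Lallement~\cite[Proposition~4.4]{La:NRF}, who gives explicit unique normal forms $x^\alpha y^\gamma x^{\alpha'}$ and $x^\alpha y^\beta x y^{\beta'} x^{\alpha'}$ for $\{x,y\}^*/\rho_2$, observes that left-multiplying a normal form by a generator again yields a normal form, and deduces cancellativity of $\rho_2$ directly. Your Heisenberg map $\phi$ is in fact equivalent --- Lallement's normal forms are precisely one distinguished representative per $\phi$-fibre --- but by proposing to reprove that each $\phi$-fibre is a single $\rho_2$-class you are redoing Lallement's computation. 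One minor slip: the inequality $[[1,1],1] \leq [1,1,1]$ is not ``higher-commutator monotonicity''; it comes out of Theorem~\ref{thm:2sn} (or its corollary) once $\rho_2$ is known to be cancellative, which is also how the paper closes the chain of equalities.

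For $|\Sigma| \geq 3$ the paper avoids your explicit-counterexample search entirely by a growth argument: Shneerson~\cite{Sh:RFS} shows $\Sigma^*/\rho_2$ has intermediate growth, whereas any finitely generated nilpotent group has polynomial growth~\cite{Wo:GFG}, so $\Sigma^*/\rho_2$ cannot embed into the free $2$-nilpotent group and $\rho_2$ is not cancellative by Theorem~\ref{thm:nmonoid}. This neatly sidesteps exactly the obstacle you identify at the end; the explicit pair $(u,v)$ is never needed.
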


\begin{proof}
 \eqref{it:free11}
 The inclusion $[1,1]\supseteq\rho_1$ holds by Lemma~\ref{lem:monoidright}.
 For the converse note that $S/\rho_1$ is isomorphic to the free commutative monoid over $\Sigma$.
 In particular $S/\rho_1$ embeds into the free abelian group over $\Sigma$. Hence $S/\rho_1$ is abelian by
 Theorem~\ref{thm:nmonoid}. Thus $[1,1]\subseteq\rho_1$.  

 \eqref{it:freeleft} Let $u\in M(1,[1,1])$. Then we have $a,b\in S^k$,
 $c := (c_1,\dots,c_\ell), d := (d_1,\dots,d_\ell)\in S^\ell$ such that $c_i\, [1,1]\, d_i$ for all $i\in[\ell]$,
 and a semigroup term such that
\[ u =   \begin{bmatrix} t(a,c) & t(a,d) \\ t(b,c) & t(b,d) \end{bmatrix}. \]
 Assume the entries in the top row are equal, i.e.  $t(a,c) = t(a,d)$.
 By~\eqref{it:free11} the assumption $c_i\, [1,1]\, d_i$ implies that the lengths of $c_i$ and $d_i$ as words over
 $\Sigma$ are equal. Together with $t(a,c)=t(a,d)$ in the free monoid, this yields that actually $c_i=d_i$ for all $i\in [\ell]$.
 Thus $t(b,c) = t(b,d)$ and $[1,[1,1]]=0$.

\eqref{it:freeright}
 If $|\Sigma| \leq 1$, then $S$ is free commutative and the assertions are trivial.
 Next we show that for $\Sigma = \{x,y\}$ of size $2$ that 
\begin{equation} \label{eq:Srhocancellative}
 \rho_2 \text{ on } S=\{x,y\}^* \text{ is cancellative.}
\end{equation}
 Lallement provides the following normal forms for elements in $S/\rho_2$~\cite[Proposition 4.4]{La:NRF}:
 every $w\in S$ is $\rho_2$-equivalent to a unique word of the form
 $x^\alpha y^\gamma x^{\alpha'}$ or $x^\alpha y^\beta x y^{\beta'}x^{\alpha'}$ for some $\alpha,\alpha',\gamma\in\N$ and
 $\beta,\beta'>0$.

 From this description it follows that if $w$ is in normal form, then so is $xw$.
 Hence for $b,c\in S$ in normal form, $xb\equiv_{\rho_2} xc$ actually implies $xb=xc$ and further $b=c$
 since $S$ is cancellative. More generally for all $b,c\in S$ we see that $xb\equiv_{\rho_2} xc$ implies
 $b\equiv_{\rho_2} c$.
 Considering the normal forms for swapped generators $y,x$ yields that we can cancel $y$ in $S/{\rho_2}$
 as well. This proves~\eqref{eq:Srhocancellative}.

 By~\cite{NT:SNG} we know that $S/\rho_2$ embeds into a $2$-nilpotent group.
 Hence $S/\rho_2$ is right $2$-nilpotent and $[[1,1],1]\subseteq\rho_2$ by Theorem~\ref{thm:nmonoid}.
 The converse inclusion holds by Lemma~\ref{lem:monoidright}.
 With Theorem~\ref{thm:2sn} we also obtain that the higher commutator $[1,1,1]=\rho_2$.

 For $|\Sigma| \geq 3$ Shneerson shows that $\Sigma^*/\rho_2$ has intermediate growth~\cite[Theorem 6.1]{Sh:RFS}.
 Hence this monoid cannot embed into the free $2$-nilpotent group over $\Sigma$, which is well-known to
 have polynomial growth~\cite{Wo:GFG}.
 Thus $\rho_2$ is not cancellative by Theorem~\ref{thm:nmonoid}.

\eqref{it:freen}
 If $S$ embeds into any nilpotent group, then $|\Sigma|\leq 1$ by~\eqref{it:freeright} and Theorem~\ref{thm:nmonoid}.
 The converse is clear.
 Hence the statement follows from Theorem~\ref{thm:nmonoid}.
\end{proof}

\bibliographystyle{plain}
\def\cprime{$'$}

\end{document}